\providecommand{\tabularnewline}{\\}
\numberwithin{equation}{section}
\numberwithin{figure}{section}
\newcommand{\lyxaddress}[1]{
\par {\raggedright #1
\vspace{1.4em}
\noindent\par}
}
\theoremstyle{plain}
\newtheorem{thm}{\protect\theoremname}
  \theoremstyle{plain}
  \newtheorem{prop}[thm]{\protect\propositionname}
  \theoremstyle{definition}
  \newtheorem{defn}[thm]{\protect\definitionname}
  \theoremstyle{remark}
  \newtheorem*{rem*}{\protect\remarkname}
  \theoremstyle{plain}
  \newtheorem{cor}[thm]{\protect\corollaryname}
  \theoremstyle{remark}
  \newtheorem{rem}[thm]{\protect\remarkname}
\date{}
\newcommand{\FigBesBeg}[1][1.0]{%
 \let\MyFigure\figure
 \let\MyEndfigure\endfigure
 \renewenvironment{figure}[1]{\begin{SCfigure}[#1]##1}{\end{SCfigure}}}
\newcommand{\FigBesEnd}{%
 \let\figure\MyFigure
 \let\endfigure\MyEndfigure}
\DeclareMathOperator{\fix}{fix}
\DeclareMathOperator{\vol}{vol}
\DeclareMathOperator{\Hom}{Hom}
\DeclareMathOperator{\Spec}{Spec}
  \providecommand{\corollaryname}{Corollary}
  \providecommand{\definitionname}{Definition}
  \providecommand{\propositionname}{Proposition}
  \providecommand{\remarkname}{Remark}
\providecommand{\theoremname}{Theorem}
\begin{document}

\title{On $G$-sets and Isospectrality}

\author{Ori Parzanchevski%
\thanks{Supported by an Advanced ERC Grant.%
}}

\maketitle

\lyxaddress{\begin{center}
parzan@math.huji.ac.il
\par\end{center}}
\begin{abstract}
We study finite $G$-sets and their tensor product with Riemannian
manifolds, and obtain results on isospectral quotients and covers.
In particular, we show the following: if $M$ is a compact connected
Riemannian manifold (or orbifold) whose fundamental group has a finite
non-cyclic quotient, then $M$ has isospectral non-isometric covers.
\end{abstract}

\section{\label{sec:Intro}Introduction}

Two Riemannian manifolds are said to be \emph{isospectral} if they
have the same spectrum of the Laplace operator (see Definition \ref{def:spectrum}).
The question whether isospectral manifolds are necessarily isometric
has gained popularity as {}``\textit{Can one hear the shape of a
drum?}\textit{\emph{'' \cite{kac1966can}, and it was answered negatively
for many classes of manifolds (e.g., \cite{milnor1964eigenvalues,buser1986isospectral,gordon1992one,conway1994some}).
}}In 1985, Sunada described a general group-theoretic method for constructing
isospectral Riemannian manifolds \cite{sunada1985riemannian}, and
recently this method was presented as a special case of a more general
one \cite{band2009isospectral,parzanchevski2010linear}. In this paper
we explore a broader special case of the latter theory, obtaining
the following, somewhat surprising, result (Corollary \ref{cor:non-cyclic-isospectral}):
\begin{quote}
Let $G$ be a finite non-cyclic group which acts faithfully on a compact
connected Riemannian manifold $M$. Then there exist $r\in\mathbb{N}$
and subgroups $H_{1},\ldots,H_{r}$ and $K_{1},\ldots,K_{r}$ of $G$
such that the disjoint unions $\bigcup_{i=1}^{r}\nicefrac{M}{H_{i}}$
and $\bigcup_{i=1}^{r}\nicefrac{M}{K_{i}}$ are isospectral non-isometric
manifolds (or orbifolds%
\footnote{If $G$ does not act freely on $M$ (i.e., some $g\in G\backslash\left\{ e\right\} $
acts on $M$ with fixed points), then $\bigcup\nicefrac{M}{H_{i}}$
and $\bigcup\nicefrac{M}{K_{i}}$ are in general orbifolds. A reader
not interested in orbifolds can assume that all spaces in the paper
are manifolds, at the cost of limiting the discussion to free actions.%
}).
\end{quote}
The result mentioned in the abstract follows immediately (Corollary
\ref{cor:non-cyclic-quotient}).

\medskip{}

Throughout this paper $M$ denotes a compact Riemannian manifold,
and $G$ a finite group which acts on it by isometries. In these settings,
Sunada's theorem \cite{sunada1985riemannian} states that if two subgroups
$H,K\leq G$ satisfy 
\begin{equation}
\forall g\in G:\quad\left|g^{G}\cap H\right|=\left|g^{G}\cap K\right|\label{eq:Sunada-condition}
\end{equation}
(where $g^{G}$ denotes the conjugacy class of $g$ in $G$), then
the quotients $\nicefrac{M}{H}$ and $\nicefrac{M}{K}$ are isospectral.
In fact, it is not harder to show (Corollary \ref{cor:isospectrality})
that if two collections $H_{1},\ldots,H_{r}$ and $K_{1},\ldots,K_{r}$
of subgroups of $G$ satisfy
\begin{equation}
\forall g\in G:\quad\sum_{i=1}^{r}\frac{\left|g^{G}\cap H_{i}\right|}{\left|H_{i}\right|}=\sum_{i=1}^{r}\frac{\left|g^{G}\cap K_{i}\right|}{\left|K_{i}\right|}\label{eq:GenSunada-condition}
\end{equation}
then $\bigcup\nicefrac{M}{H_{i}}$ and $\bigcup\nicefrac{M}{K_{i}}$
are isospectral%
\footnote{In this paper $\bigcup$ always stands for disjoint union.%
}. We shall see, however, that in contrast with Sunada pairs ($H,K$
satisfying \eqref{eq:Sunada-condition}), collections satisfying \eqref{eq:GenSunada-condition}
are rather abundant. In fact, we will show that every finite non-cyclic
group $G$ has such collections, and furthermore, that some of them
(which we denote \emph{unbalanced, }see Definition \ref{def:unbalanced_pair})
necessarily yield non-isometric quotients.

\subsection{\label{sub:ExampleZ2_Z2}Example}

Let $T$ be the torus $\nicefrac{\mathbb{R}^{2}}{\mathbb{Z}^{2}}$.
Let $G=\left\{ e,\sigma,\tau,\sigma\tau\right\} $ be the non-cyclic
group of size four (i.e., $G\cong\nicefrac{\mathbb{Z}}{2\mathbb{Z}}\times\nicefrac{\mathbb{Z}}{2\mathbb{Z}}$),
and let $\sigma,\tau\in G$ act on $T$ by two perpendicular rotations:
$\sigma\cdot\left(x,y\right)=\left(x,y+\frac{1}{2}\right)$ and $\tau\cdot\left(x,y\right)=\left(x+\frac{1}{2},y\right)$
(Figure \ref{fig:Z2Z2 action on the torus}).\FigBesBeg 
\begin{figure}[h]
\begin{minipage}[t]{0.6\columnwidth}%
\begin{center}
\includegraphics[scale=0.85]{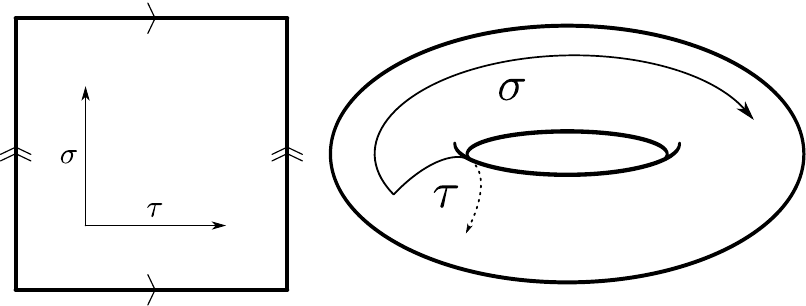}
\par\end{center}%
\end{minipage}\caption{\label{fig:Z2Z2 action on the torus}Two views of an action of $G=\left\{ e,\sigma,\tau,\sigma\tau\right\} \cong\nicefrac{\mathbb{Z}}{2\mathbb{Z}}\times\nicefrac{\mathbb{Z}}{2\mathbb{Z}}$
on the torus $T$.}
\end{figure}
\FigBesEnd 

Now, the subgroups
\begin{gather}
\begin{array}{l}
H_{1}=\left\{ e,\sigma\right\} \\
H_{2}=\left\{ e,\tau\right\} \\
H_{3}=\left\{ e,\sigma\tau\right\} 
\end{array}\qquad\begin{array}{l}
K_{1}=\left\{ e\right\} \\
K_{2}=K_{3}=G
\end{array}\label{eq:Hi_Ki_for_Z2xZ2}
\end{gather}
satisfy \eqref{eq:GenSunada-condition} (since $G$ is abelian, \eqref{eq:GenSunada-condition}
becomes 
\[
\forall g\in G:\quad\sum\limits _{i\,:\, g\in H_{i}}\frac{1}{\left|H_{i}\right|}=\sum\limits _{i\,:\, g\in K_{i}}\frac{1}{\left|K_{i}\right|}\,,
\]
which is easy to verify). Thus, the unions of tori $\bigcup\nicefrac{T}{H_{i}}=\nicefrac{T}{\left\langle \sigma\right\rangle }\bigcup\nicefrac{T}{\left\langle \tau\right\rangle }\bigcup\nicefrac{T}{\left\langle \sigma\tau\right\rangle }$
and $\bigcup\nicefrac{T}{K_{i}}=T\bigcup\nicefrac{T}{G}\bigcup\nicefrac{T}{G}$
are isospectral (Figure \ref{fig:Z2_torus_pair}).
\begin{figure}[H]
\centering{}\includegraphics[scale=0.5]{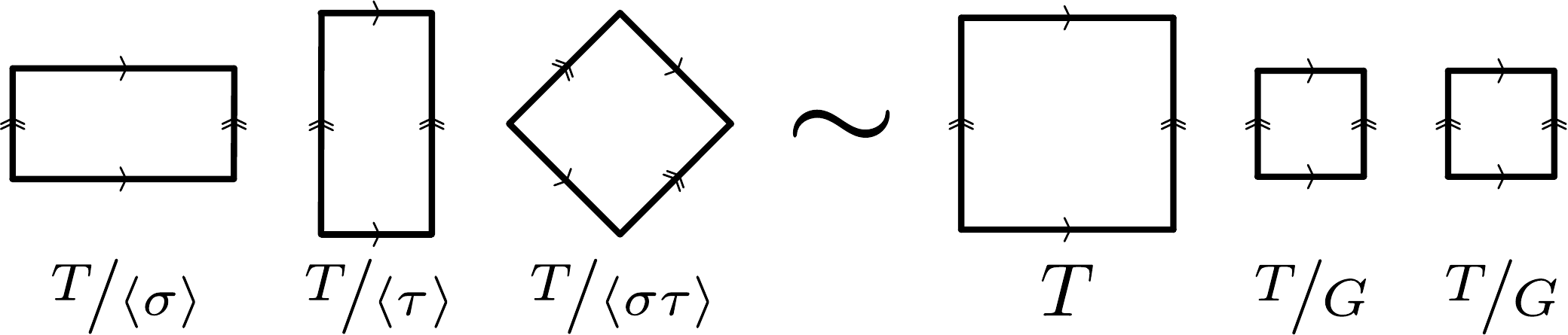}\caption{\label{fig:Z2_torus_pair}An isospectral pair consisting of quotients
of the torus $T$ (Figure \ref{fig:Z2Z2 action on the torus}) by
the subgroups of $G$ described in \eqref{eq:Hi_Ki_for_Z2xZ2}.}
\end{figure}
This isospectral pair, which we shall return to in section \ref{sub:Application---Hecke},
was immortalized in the words of Peter Doyle \cite{doyle2011laplace}:
\begin{quotation}
Two one-by-ones and a two-by-two,

Two two-by-ones and a roo-by-roo. 
\end{quotation}

\medskip{}

This paper is ogranized as follows. Section \ref{sec:G-sets} describes
the elements we shall need from the theory of $G$-sets: their classification,
linear equivalence, and tensor product. Section \ref{sec:Action-and-spectrum}
explains why tensoring a manifold with linearly equivalent $G$-sets
gives isospectral manifolds, and defines the notion of unbalanced
$G$-sets, which yield isospectral manifolds which are also non-isometric.
At this point the focus turns to the totality of isospectral pairs
arising from a single action, and it is shown that it posseses a natural
structure of a lattice. Section \ref{sec:Construction} is devoted
to the proof that every finite non-cyclic group admits an unbalanced
pair, and various isospectral pairs are encountered along the way.
Section \ref{sec:Computation} demonstrates a detailed computation
of (generators for) the lattice of isospectral pairs arising from
the symmetries of the regular hexagon. Finally, Section \ref{sec:Generalizations}
hints at possible generalizations of the results presented in this
paper.

\section{\label{sec:G-sets}$G$-sets}

To explain where the conditions \eqref{eq:Sunada-condition} and \eqref{eq:GenSunada-condition}
come from, we use the theory of $G$-sets. We start by recalling the
basic notions and facts.

\subsection{\label{sub:G-sets-classification}$G$-sets and their classification}

For a group $G$, a (left)\emph{ $G$-set} $X$ is a set equipped
with a (left) action of $G$, i.e., a multiplication rule $G\times X\rightarrow X$.
Such an action partitions $X$ into \emph{orbits}, the subsets of
the form $Gx=\left\{ gx\,\middle|\, g\in G\right\} $ for $x\in X$.
A $G$-set with one orbit is said to be \emph{transitive}, and every
$G$-set decomposes uniquely as a disjoint union of transitive ones,
its orbits. For every subgroup $H$ of $G$, the set of left cosets
$\nicefrac{G}{H}$ is a transitive (left) $G$-set.

We denote by $\Hom_{G}\left(X,Y\right)$ the set of $G$-set homomorphisms
from $X$ to $Y$, which are the functions $f:X\rightarrow Y$ which
commute with the actions, i.e., satisfy $f\left(gx\right)=gf\left(x\right)$
for all $g\in G$, $x\in X$. An isomorphism is, as usual, an invertible
homomorphism.

Every transitive $G$-set is isomorphic to $\nicefrac{G}{H}$, for
some subgroup $H$ of $G$, and $\nicefrac{G}{H}$ and $\nicefrac{G}{K}$
are isomorphic if and only if $H$ and $K$ are conjugate subgroups
of $G$. More generally, every $G$-set is isomorphic to $\bigcup_{i\in I}\nicefrac{G}{H_{i}}$
for some collection (possibly with repetitions) of subgroups $H_{i}$
$\left(i\in I\right)$ in $G$, and these are determined uniquely
up to order and conjugacy. I.e., $X=\bigcup\nicefrac{G}{H_{i}}$ and
$Y=\bigcup\nicefrac{G}{K_{i}}$ are isomorphic if and only if after
some reordering $H_{i}$ is conjugate to $K_{i}$ for every $i$.

A \emph{right $G$-set }is a set equipped with a right action of $G$,
i.e., a multiplication rule $X\times G\rightarrow X$ (satisfying
$x\left(gg'\right)=\left(xg\right)g'$). The classification of right
$G$-sets by right cosets is analogous to that of left $G$-sets by
left ones.

\subsection{Linearly equivalent $G$-sets}

Henceforth $G$ is a finite group, and all $G$-sets are finite, so
that every $G$-set is isomorphic to a finite disjoint union of the
form $\bigcup\nicefrac{G}{H_{i}}$. For a $G$-set $X$, $\mathbb{C}\left[X\right]$
denotes the complex representation of $G$ having $X$ as a basis,
with $G$ acting on $\mathbb{C}\left[X\right]$ by the linear extension
of its action on $X$, i.e., $g\sum a_{i}x_{i}=\sum a_{i}gx_{i}$
($g\in G$, $a_{i}\in\mathbb{C}$, $x_{i}\in X$).

If $X\cong Y$ (as $G$-sets), then $\mathbb{C}\left[X\right]\cong\mathbb{C}\left[Y\right]$
(as $\mathbb{C}G$-modules, i.e., complex representations), but not
vice versa. In fact, this is precisely where \eqref{eq:Sunada-condition}
and \eqref{eq:GenSunada-condition} come from:
\begin{prop}
\label{pro:equivalence_of_linequiv}For two (finite) $G$-sets $X,Y$
the following are equivalent:
\begin{enumerate}
\item $\mathbb{C}\left[X\right]\cong\mathbb{C}\left[Y\right]$ as complex
representations of $G$.
\item Every $g\in G$ fixes the same number of elements in $X$ and in $Y$.
\item $X\cong\bigcup\nicefrac{G}{H_{i}}$ and $Y\cong\bigcup\nicefrac{G}{K_{i}}$
for $H_{i},K_{i}\leq G$ satisfying \eqref{eq:GenSunada-condition}.
\end{enumerate}
\end{prop}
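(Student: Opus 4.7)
The plan is to prove $(1) \Leftrightarrow (2) \Leftrightarrow (3)$ using standard character theory together with a direct fixed-point count on cosets.

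For $(1) \Leftrightarrow (2)$, I would invoke the fact that two finite-dimensional complex representations of a finite group are isomorphic if and only if their characters agree. The character of the permutation representation $\mathbb{C}[X]$ evaluated at $g \in G$ equals $|\fix_X(g)|$, the number of elements of $X$ fixed by $g$, since $g$ acts on the basis $X$ by permutation and only the fixed basis vectors contribute to the trace. Thus $\mathbb{C}[X] \cong \mathbb{C}[Y]$ as $\mathbb{C}G$-modules precisely when $|\fix_X(g)| = |\fix_Y(g)|$ for every $g$.

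For $(2) \Leftrightarrow (3)$, writing $X \cong \bigcup_i G/H_i$ and $Y \cong \bigcup_i G/K_i$ (which we may do by the classification recalled in Section 2.1), the fixed-point count decomposes as $|\fix_X(g)| = \sum_i |\fix_{G/H_i}(g)|$. The key computation is to show that
\[
|\fix_{G/H}(g)| \;=\; \frac{|C_G(g)| \cdot |g^G \cap H|}{|H|}.
\]
To see this I would observe that a coset $xH$ is fixed by $g$ iff $x^{-1}gx \in H$, and then count pairs $(x,h) \in G \times (g^G \cap H)$ with $x^{-1}gx = h$: for each $h \in g^G \cap H$ the fibre has size $|C_G(g)|$, and each fixed coset $xH$ is counted $|H|$ times. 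Substituting this formula into $(2)$ and cancelling the common factor $|C_G(g)|$ yields exactly the relation \eqref{eq:GenSunada-condition}, giving the equivalence.

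I do not expect a serious obstacle: $(1)\Leftrightarrow(2)$ is essentially a restatement of the trace formula for permutation representations, and the only mildly technical point is the orbit-counting identity for $|\fix_{G/H}(g)|$, which is a standard double-counting argument. Care should be taken that the count \emph{does} split linearly over disjoint unions (so the hypothesis $\sum_i \cdot = \sum_i \cdot$ in $(3)$ really is equivalent, rather than merely sufficient for, condition $(2)$), but this is immediate from $\fix_{X \sqcup X'}(g) = \fix_X(g) \sqcup \fix_{X'}(g)$.
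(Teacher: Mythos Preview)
Your proposal is correct and follows essentially the same route as the paper: character theory for $(1)\Leftrightarrow(2)$, and the fixed-point formula $|\fix_{G/H}(g)|=\frac{|g^{G}\cap H|\,|C_{G}(g)|}{|H|}$ together with additivity over orbits for $(2)\Leftrightarrow(3)$. In fact you supply more detail than the paper, which simply states the formula as ``a simple exercise''; your double-counting of pairs $(x,h)$ with $x^{-1}gx=h$ is exactly the intended argument.
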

\begin{proof}
The character of $\mathbb{C}\left[X\right]$ is $\chi_{\mathbb{C}\left[X\right]}\left(g\right)=\left|\fix_{X}\left(g\right)\right|$,
hence by character theory $\mathit{\left(1\right)}$ is equivalent
to $\mathit{\left(2\right)}$. It is a simple exercise to show that
$\left|\fix_{\nicefrac{G}{H}}\left(g\right)\right|=\frac{\left|g^{G}\cap H\right|\left|\vphantom{g^{G}}C_{G}\left(g\right)\right|}{\left|H\right|}$,
hence for $H_{i}$ such that $X\cong\bigcup\nicefrac{G}{H_{i}}$ we
obtain 
\[
\left|\fix_{X}\left(g\right)\right|=\sum_{i}\left|\fix_{\nicefrac{G}{H_{i}}}\left(g\right)\right|=\left|\vphantom{g^{G}}C_{G}\left(g\right)\right|\cdot\sum_{i}\frac{\left|g^{G}\cap H_{i}\right|}{\left|H_{i}\right|},
\]
showing that $\mathit{\left(2\right)}$ is equivalent to $\mathit{\left(3\right)}$.\end{proof}
\begin{defn}
\label{def:lin_eq_sets}$G$-sets $X$ and $Y$ as in Proposition
\ref{pro:equivalence_of_linequiv} are said to be \emph{linearly equivalent.}\end{defn}
\begin{rem*}
In the literature one encounters also the terms \emph{arithmetically
equivalent}, \emph{almost equivalent}, \emph{Gassman pair},\emph{
}or \emph{Sunada pair}. Also, sometimes the {}``trivial case'',
i.e., when $X\cong Y$ as $G$-sets, is excluded.
\end{rem*}

\subsubsection{\label{sub:Z2_Z2_Gsets}Back to the example}

In \eqref{eq:Hi_Ki_for_Z2xZ2} we presented subgroups $H_{i},K_{i}$
of $G=\left\{ e,\sigma,\tau,\sigma\tau\right\} \cong\nicefrac{\mathbb{Z}}{2\mathbb{Z}}\times\nicefrac{\mathbb{Z}}{2\mathbb{Z}}$,
which satisfied condition \eqref{eq:GenSunada-condition}. Figure
\ref{fig:Z2xZ2_Gsets} shows the corresponding $G$-sets $X=\bigcup\nicefrac{G}{H_{i}}$
and $Y=\bigcup\nicefrac{G}{K_{i}}$, and one indeed sees that 
\[
\left|\fix_{X}\left(g\right)\right|=\left|\fix_{Y}\left(g\right)\right|=\begin{cases}
6 & g=e\\
2 & g=\sigma,\tau,\sigma\tau
\end{cases}
\]

\FigBesBeg 
\begin{figure}[h]
\caption{\label{fig:Z2xZ2_Gsets}$X$ and $Y$ are linearly equivalent $G$-sets
for $G=\left\{ e,\sigma,\tau,\sigma\tau\right\} $, corresponding
to the subgroups in \eqref{eq:Hi_Ki_for_Z2xZ2}.}

\begin{minipage}[t]{0.5\columnwidth}%
\begin{center}
\includegraphics{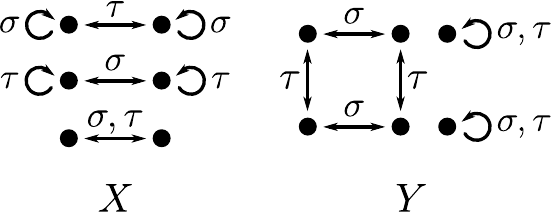}
\par\end{center}%
\end{minipage}
\end{figure}
We note that $X$ and $Y$ are not isomorphic as $G$-sets, as the
sizes of their orbits are different: $X$ has three orbits of size
two, whereas $Y$ has one orbit of size four and two orbits of size
one.

\subsubsection{\label{sub:The-transitive-case}The transitive case - Gassman-Sunada
pairs}

When restricting to transitive $G$-sets, $X$ and $Y$ are linearly
equivalent exactly when $X\cong\nicefrac{G}{H}$, $Y\cong\nicefrac{G}{K}$
for $H,K\leq G$ satisfying the Sunada condition \eqref{eq:Sunada-condition}.
In the literature $H,K$ are known as \emph{almost conjugate, locally
conjugate, arithmetically equivalent, linearly equivalent, Gassman
pair}, or\emph{ Sunada pair}, and again one usually excludes the trivial
case, which is when $H$ and $K$ are conjugate. For a group to have
a Sunada pair its order must be a product of at least five primes
\cite{dipasqualeorder}, but there exist such $n$ (the smallest being
80), for which no group of size $n$ has one. The smallest group which
has a Sunada pair is $\nicefrac{\mathbb{Z}}{8\mathbb{Z}}\rtimes\mathrm{Aut}\left(\nicefrac{\mathbb{Z}}{8\mathbb{Z}}\right)$
(of size 32).

\subsection{\label{sub:Tensor-product-of-Gsets}Tensor product of $G$-sets}

The theory of $G$-sets is parallel in many aspects to that of $R$-modules
(where $R$ stands for a non-commutative ring). This section describes
in some details the $G$-set analogue of the tensor product of modules.
Except for Definition \ref{def:The-tensor-product}, and the universal
property \eqref{eq:tensor_hom_adj}, this section may be skipped by
abstract nonsence haters.

If $M$ is a right $R$-module, for every abelian group $A$ the group
of homomorphisms $\Hom_{Ab}\left(M,A\right)$ has a structure of a
(left) $R$-module, by $\left(rf\right)\left(m\right)=f\left(mr\right)$.
In fact, $\Hom_{Ab}\left(M,\_\right)$ is a functor from $Ab$ to
$Rmod$, the category of left $R$-modules. This functor has a celebrated
left adjoint, the tensor product $M\otimes_{R}\_:Rmod\rightarrow Ab$.
This means that for every $R$-module $N$ there is an isomorphism
\[
\Hom_{Ab}\left(M\otimes_{R}N,A\right)\cong\Hom_{R}\left(N,\Hom_{Ab}\left(M,A\right)\right)
\]
which is natural in $N$ and $A$. 

The analogue for $G$-sets is this: if $X$ is a right $G$-set,
then for every set $S$ the set $\Hom_{Set}\left(X,S\right)$ has
a structure of a (left) $G$-set, by $\left(gf\right)\left(x\right)=f\left(xg\right)$.
Here $\Hom_{Set}\left(X,\_\right)$ is a functor from $Set$ to $Gset$
(the category of left $G$-sets), and again it has a left adjoint:
\begin{defn}
\label{def:The-tensor-product}The tensor product over $G$ of a right
$G$-set $X$ and a left $G$-set $Y$, denoted $X\times_{G}Y$, is
the set $\nicefrac{X\times Y}{\left(xg,y\right)\sim\left(x,gy\right)}$,
i.e., the quotient set of the cartesian product $X\times Y$ by the
relations $\left(xg,y\right)\sim\left(x,gy\right)$ (for all $x\in X$,
$g\in G$, $y\in Y$).
\end{defn}
The functor $X\times_{G}\_:Gset\rightarrow Set$ is indeed the left
adjoint of $\Hom_{Set}\left(X,\_\right)$, i.e., for every $G$-set
$Y$ there is an isomorphism (natural in $Y$ and $S$)
\[
\Hom_{Set}\left(X\times_{G}Y,S\right)\cong\Hom_{G}\left(Y,\Hom_{Set}\left(X,S\right)\right).
\]
As it is custom to write $B^{A}$ for $\Hom_{Set}\left(A,B\right)$,
this can be written as
\begin{equation}
S^{X\times_{G}Y}\cong\Hom_{G}\left(Y,S^{X}\right)\label{eq:tensor_hom_adj}
\end{equation}
which for $G=1$ is the familiar isomorphism of sets $S^{X\times Y}\cong\left(S^{X}\right)^{Y}$.

The tensor product of $G$-sets behaves much like that of modules,
e.g., there are natural isomorphisms as follows:
\begin{itemize}
\item Distributivity: $\left(\bigcup X_{i}\right)\times_{G}Y\cong\bigcup\left(X_{i}\times_{G}Y\right)$.
\item Associativity: $\left(X\times_{G}Y\right)\times_{H}Z\cong X\times_{G}\left(Y\times_{H}Z\right)$
(where $Y$ is a $\left(G,H\right)$-biset, i.e., $\left(gy\right)h=g\left(yh\right)$
holds for all $g\in G$, $y\in Y$, $h\in H$).
\item Neutral element: $G\times_{G}X\cong X$.
\item Extension of scalars: if $H\leq G$, $G$ is a $\left(G,H\right)$-biset.
For an $H$-set $X$, this gives $G\times_{H}X$ a $G$-set structure
(by $g'\left(g,x\right)=\left(g'g,x\right)$). This construction
is adjoint to the restriction of scalars, i.e., for a $G$-set $Y$
one has 
\begin{equation}
\Hom_{G}\left(G\times_{H}X,Y\right)\cong\Hom_{H}\left(X,Y\right).\label{eq:extension_adjointness}
\end{equation}
\end{itemize}
\begin{rem*}
A point in which groups and rings differ is the following: a left
$G$-set can be regarded as a right one, by defining the right action
to be $xg=g^{-1}x$. Thus, we shall allow ourselves to regard left
$G$-sets as a right ones, and vice versa%
\footnote{For rings, a left $R$-module can only be regarded as a right $R^{\mathrm{opp}}$-module,
and in general $R\ncong R^{opp}$. In groups, $G\cong G^{\mathrm{opp}}$
canonically by the inverse map.%
}. Going back to Definition \ref{def:The-tensor-product}, if we choose
to regard $X$ as a left $G$-set, we get
\[
X\times_{G}Y=\frac{X\times Y}{\left(xg,y\right)\sim\left(x,gy\right)}=\frac{X\times Y}{\left(g^{-1}x,y\right)\sim\left(x,gy\right)}=\frac{X\times Y}{\left(x,y\right)\sim\left(gx,gy\right)}=\nicefrac{X\times Y}{G}
\]
i.e., the tensor product is the orbit set of the normal (cartesian)
product of the left $G$-sets $X$ and $Y$. A word of caution: the
process of turning a left $G$-set into a right one does not give
it, in general, a $\left(G,G\right)$-biset structure.
\end{rem*}

\section{\label{sec:Action-and-spectrum}Action and spectrum}

\subsection{Tensor product of $G$-manifolds}

Assume we have an action of $G$ on a Riemannian manifold $M$ and
on a finite $G$-set $X$. Our purpose is to study $M\times_{G}X$,
which has a Riemannian orbifold structure as a quotient of $M\times X$
(where $X$ is given the discrete topology)%
\footnote{More generally, if $M$ and $M'$ are $G$-manifolds, $M\times_{G}M'$
is an orbifold (manifold, if $G$ acts freely on $M\times M'$), but
in this paper we shall only consider the tensor product of a $G$-manifold
and a finite $G$-set (which can be regarded as a compact manifold
of dimension $0$).%
}. In Section \ref{sec:Intro} we discussed unions of the form $\bigcup\nicefrac{M}{H_{i}}$
for subgroups $H_{i}\leq G$, and this is still our object of study:
we can choose subgroups $H_{i}$ of $G$ such that $X\cong\bigcup\nicefrac{G}{H_{i}}$,
and for any such choice we have an isometry $M\times_{G}X\cong\bigcup\nicefrac{M}{H_{i}}$.
This can be verified directly, or by the tensor properties:
\begin{align*}
M\times_{G}X & \cong M\times_{G}\left(\bigcup\nicefrac{G}{H_{i}}\right)\cong\bigcup\left(M\times_{G}\nicefrac{G}{H_{i}}\right)\cong\bigcup\left(M\times_{G}\left(G\times_{H_{i}}\mathbf{1}\right)\right)\\
 & \cong\bigcup\left(\left(M\times_{G}G\right)\times_{H_{i}}\mathbf{1}\right)\cong\bigcup\left(M\times_{H_{i}}\mathbf{1}\right)\cong\bigcup\nicefrac{M}{H_{i}}
\end{align*}
where $\mathbf{1}$ denotes a one-element set. In this light, the
tensor product generalizes the notion of quotients, since quotients
by subgroups of $G$ correspond to tensoring with transitive $G$-sets:
$\nicefrac{M}{H}\cong M\times_{G}\nicefrac{G}{H}$. The advantage
of studying $M\times_{G}X$ instead of $\bigcup\nicefrac{M}{H_{i}}$
is that the former is free of choices, and thus more suitable for
functorial constructions, and yields more elegant proofs. On the other
hand, $\bigcup\nicefrac{M}{H_{i}}$ is much more familiar, and the
reader is encouraged to envision $M\times_{G}X$ as a union of quotients
of $M$.
\begin{thm}
\label{thm:product_spectrum}If $G$ acts on a Riemannian manifold
$M$ then for every finite $G$-set $X$ there is an isomorphism
\[
L^{2}\left(M\times_{G}X\right)\cong\Hom_{\mathbb{C}G}\left(\mathbb{C}\left[X\right],L^{2}\left(M\right)\right).
\]
(here $L^{2}\left(M\right)$ is a representation of $G$ by $\left(gf\right)\left(m\right)=f\left(g^{-1}m\right)$,
for $g\in G$, $f\in L^{2}\left(M\right)$, $m\in M$).\end{thm}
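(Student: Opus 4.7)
The plan is to identify both sides of the claimed isomorphism with the $G$-invariants of $L^2(M)\otimes\mathbb{C}[X]$, using the self-duality of the permutation representation $\mathbb{C}[X]$.

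First I would rewrite the left-hand side. Following the convention at the end of Section \ref{sub:Tensor-product-of-Gsets}, view $M$ as a right $G$-set by $mg=g^{-1}m$, so that $M\times_G X\cong(M\times X)/G$ with $G$ acting diagonally. Since $X$ is finite with the discrete topology, $L^2$ of the quotient is the space of diagonally $G$-invariant $L^2$-functions on $M\times X$, and $L^2(M\times X)\cong L^2(M)\otimes\mathbb{C}[X]$ as $G$-representations under the diagonal action $g(f\otimes e_x)=(gf)\otimes e_{gx}$. Hence
\[
L^2(M\times_G X)\;\cong\;\bigl(L^2(M)\otimes\mathbb{C}[X]\bigr)^G.
\]

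For the right-hand side, I would use the general identity $(V\otimes W)^G\cong\Hom_{\mathbb{C}G}(W^*,V)$ (valid whenever $W$ is finite-dimensional), together with the fact that the permutation representation $\mathbb{C}[X]$ is self-dual via the $G$-invariant pairing $\langle e_x,e_y\rangle=\delta_{x,y}$. Chaining these gives $\bigl(L^2(M)\otimes\mathbb{C}[X]\bigr)^G\cong\Hom_{\mathbb{C}G}(\mathbb{C}[X]^*,L^2(M))\cong\Hom_{\mathbb{C}G}(\mathbb{C}[X],L^2(M))$. The explicit isomorphism sends a $G$-equivariant $\phi\colon\mathbb{C}[X]\to L^2(M)$ to the function $[(m,x)]\mapsto(\phi(e_x))(m)$, which is well-defined on $M\times_G X$ precisely by the equivariance of $\phi$.

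The main thing to watch is the bookkeeping around the left/right conversion and the naturality of the isomorphism in $X$, which will be needed when this theorem is applied functorially later. As a sanity check---and as an alternative proof---one can decompose $X\cong\bigcup\nicefrac{G}{H_i}$, whereupon both sides collapse to $\bigoplus_i L^2(M)^{H_i}$: the left by the isometry $M\times_G X\cong\bigcup\nicefrac{M}{H_i}$ combined with the identification of $L^2(\nicefrac{M}{H_i})$ with the $H_i$-invariant functions in $L^2(M)$, and the right by $\mathbb{C}[\nicefrac{G}{H_i}]\cong\mathrm{Ind}_{H_i}^G\mathbf{1}$ together with Frobenius reciprocity.
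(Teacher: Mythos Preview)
Your argument is correct, and the explicit map you write down at the end coincides with the paper's. The route, however, differs somewhat from the paper's. The paper works entirely at the categorical level: it first invokes the tensor--hom adjunction for $G$-sets \eqref{eq:tensor_hom_adj} to get $\mathbb{C}^{M\times_G X}\cong\Hom_G\bigl(X,\mathbb{C}^M\bigr)$, and then the free--forgetful adjunction \eqref{eq:free-CG-module} to pass from $\Hom_G(X,\_)$ to $\Hom_{\mathbb{C}G}(\mathbb{C}[X],\_)$; the $L^2$ condition is checked at the end. Your version instead linearises immediately, identifying $L^2(M\times_G X)$ with the $G$-invariants of $L^2(M)\otimes\mathbb{C}[X]$, and then reaches $\Hom_{\mathbb{C}G}(\mathbb{C}[X],L^2(M))$ via the identity $(V\otimes W)^G\cong\Hom_{\mathbb{C}G}(W^*,V)$ together with the self-duality of permutation representations. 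The paper's path is marginally cleaner in that it never needs to mention duals; yours is perhaps more familiar to a reader thinking in terms of representation theory rather than $G$-set functoriality, and it makes the role of $G$-invariants explicit. Your alternative proof via the decomposition $X\cong\bigcup\nicefrac{G}{H_i}$ and Frobenius reciprocity is also valid---indeed the paper alludes to this as the approach of \cite{band2009isospectral}---but, as the paper notes, the tensor-product viewpoint gives a proof free of choices.
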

\begin{rem*}
In the language of \cite{band2009isospectral,parzanchevski2010linear},
this means that $M\times_{G}X$ is an $\nicefrac{M}{\mathbb{C}\left[X\right]}$-manifold,
and since $M\times_{G}X\cong\bigcup\nicefrac{M}{H_{i}}$, this is
implied in Section 9.3 of \cite{band2009isospectral}. However, the
perspective of tensor product gives a direct proof.\end{rem*}
\begin{proof}
We have isomorphisms of vector spaces 
\begin{equation}
\mathbb{C}^{M\times_{G}X}\cong\Hom_{G}\left(X,\mathbb{C}^{M}\right)\cong\Hom_{\mathbb{C}G}\left(\mathbb{C}\left[X\right],\mathbb{C}^{M}\right).\label{eq:Hom_MGX}
\end{equation}
The left isomorphism is by adjointness of tensor and hom \eqref{eq:tensor_hom_adj},
and it is given explicitly by sending $f\in\mathbb{C}^{M\times_{G}X}$
to $F\in\Hom_{G}\left(X,\mathbb{C}^{M}\right)$ defined by $F\left(x\right)\left(m\right)=f\left(m,x\right)$.
The next isomorphism is by adjointness of the free construction $X\mapsto\mathbb{C}\left[X\right]$
and the forgetful functor $\mathbb{C}Gmod\rightarrow Gset$, i.e.,
\begin{equation}
\Hom_{G}\left(X,\_\right)\cong\Hom_{\mathbb{C}G}\left(\mathbb{C}\left[X\right],\_\right),\label{eq:free-CG-module}
\end{equation}
and is given explicitly by linear extension, i.e., defining $F\left(\sum a_{i}x_{i}\right)=\sum a_{i}F\left(x_{i}\right)$.
The correspondence of the $L^{2}$ conditions then follows from the
finiteness of $G$ and $X$, and the fact that $\int_{M\times X}\left|f\right|^{2}=\sum_{x\in X}\int_{M}\left|f\left(\,\cdot\,,x\right)\right|^{2}$.\end{proof}
\begin{defn}
\label{def:spectrum}The spectrum of a Riemannian manifold $M$ is
the function $\mathrm{Spec}_{M}:\mathbb{R}\rightarrow\mathbb{N}$
which perscribes to every number its multiplicity as an eigenvalue
of the Laplace operator on $M$, i.e., $\Spec_{M}\left(\lambda\right)=\dim L_{\lambda}^{2}\left(M\right)$
where $L_{\lambda}^{2}\left(M\right)=\left\{ f\in L^{2}\left(M\right)\,\middle|\,\Delta f=\lambda f\right\} $.\end{defn}
\begin{cor}
\label{cor:isospectrality}If $G$ acts on $M$, and $X$ and $Y$
are linearly equivalent $G$-sets, then $M\times_{G}X$ and $M\times_{G}Y$
are isospectral.\end{cor}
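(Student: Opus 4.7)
The plan is short: refine the isomorphism of Theorem \ref{thm:product_spectrum} eigenspace by eigenspace, and then invoke the linear equivalence of $X$ and $Y$.

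First I would observe that since $G$ acts on $M$ by isometries, the Laplace operator commutes with the $G$-action on $L^{2}(M)$, so each eigenspace $L^{2}_{\lambda}(M)$ is a $\mathbb{C}G$-submodule of $L^{2}(M)$. Next, I would verify that under the explicit correspondence $f\leftrightarrow F$ used in the proof of Theorem \ref{thm:product_spectrum} (where $F(x)(m)=f(m,x)$), the orbifold Laplacian on $M\times_{G}X$ goes over to post-composition with $\Delta_{M}$. This is essentially because the quotient map $M\times X\to M\times_{G}X$ is a local isometry, and on $M\times X$ (with $X$ discrete) the Laplacian acts fiber-wise as $\Delta_{M}$; equivalently, one may identify $L^{2}(M\times_{G}X)$ with the $G$-invariants of $L^{2}(M\times X)$, on which $\Delta_{M\times X}$ manifestly commutes with $G$ and restricts to the orbifold Laplacian. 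Consequently the isomorphism of Theorem \ref{thm:product_spectrum} restricts, for every $\lambda\in\mathbb{R}$, to
$$L^{2}_{\lambda}(M\times_{G}X)\;\cong\;\Hom_{\mathbb{C}G}\bigl(\mathbb{C}[X],\,L^{2}_{\lambda}(M)\bigr),$$
and likewise with $Y$ in place of $X$.

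Finally, linear equivalence of $X$ and $Y$ means, by Proposition \ref{pro:equivalence_of_linequiv}, that $\mathbb{C}[X]\cong\mathbb{C}[Y]$ as $\mathbb{C}G$-modules. Applying the functor $\Hom_{\mathbb{C}G}(\,\cdot\,,L^{2}_{\lambda}(M))$ to this isomorphism yields isomorphic (hence equidimensional) $\lambda$-eigenspaces for $M\times_{G}X$ and $M\times_{G}Y$, which by Definition \ref{def:spectrum} is exactly the isospectrality assertion. I expect the only genuinely delicate step to be the Laplacian-equivariance of the isomorphism in Theorem \ref{thm:product_spectrum}, particularly in the orbifold case when $G$ does not act freely; once that compatibility is pinned down, the corollary is entirely formal.
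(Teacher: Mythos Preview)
Your proposal is correct and coincides with the paper's own argument: the paper explicitly offers two routes, and yours is precisely the second (``Alternatively, one can replace $L^{2}$ throughout Theorem~\ref{thm:product_spectrum} with $L_{\lambda}^{2}$\ldots''), with your added care about why the Laplacian passes through the isomorphism. The paper's first route is only cosmetically different---it writes down the explicit transplantation map $\mathcal{T}:L^{2}(M\times_{G}X)\to L^{2}(M\times_{G}Y)$ induced by a chosen $\mathbb{C}G$-isomorphism $\mathbb{C}[Y]\to\mathbb{C}[X]$ and observes it commutes with the Laplacians---but this amounts to the same verification you flagged as the delicate step.
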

\begin{rem*}
For transitive $X$ and $Y$, this is equivalent to Sunada's theorem.\end{rem*}
\begin{proof}
By Theorem \ref{thm:product_spectrum}, we have $L^{2}\left(M\times_{G}X\right)\cong L^{2}\left(M\times_{G}Y\right)$,
but we must verify that this isomorphism respects the Laplace operator.
If $y\mapsto\sum_{x\in X}a_{y,x}x$ is a $\mathbb{C}G$-module isomorphism
from $\mathbb{C}\left[Y\right]$ to $\mathbb{C}\left[X\right]$, then
$\mathcal{T}:L^{2}\left(M\times_{G}X\right)\overset{\cong}{\longrightarrow}L^{2}\left(M\times_{G}Y\right)$
is given explicitly by $\left(\mathcal{T}f\right)\left(m,y\right)=\sum_{x\in X}a_{y,x}f\left(m,x\right)$
($\mathcal{T}$ is a \emph{transplantation} map, see \cite{buser1986isospectral,conway1994some,chapman1995drums}).
This isomorphism commutes with the Laplace operators on their domains
of definition, hence inducing isomorphism of eigenspaces, and in particular
equality of spectra. Alternatively, one can replace $L^{2}$ throughout
Theorem \ref{thm:product_spectrum} with $L_{\lambda}^{2}$, obtaining
directly $L_{\lambda}^{2}\left(M\times_{G}X\right)\cong\Hom_{\mathbb{C}G}\left(\mathbb{C}\left[X\right],L_{\lambda}^{2}\left(M\right)\right)$,
and thus $L_{\lambda}^{2}\left(M\times_{G}X\right)\cong L_{\lambda}^{2}\left(M\times_{G}Y\right)$.
\end{proof}
The theorem and corollary above give us isospectral manifolds, but
do not tell us whether they are isometric or not. First of all, if
$X$ and $Y$ are isomorphic as $G$-sets then $M\times_{G}X$ and
$M\times_{G}Y$ are certainly isometric. However, this may happen
also for non-isomorphic $G$-sets%
\footnote{For example, if $H$ and $K$ are isomorphic subgroups of $G$, and
the action of $G$ on $M$ can be extended to an action of some supergroup
$\widehat{G}$ in which $H$ and $K$ are conjugate, then $\nicefrac{M}{H}$
and $\nicefrac{M}{K}$ are also isometric.%
}. The next section deals with this inconvenience.

\subsection{Unbalanced pairs }

In Section \ref{sub:Z2_Z2_Gsets} we concluded that the $G$-sets
$X$ and $Y$ in Figure \ref{fig:Z2xZ2_Gsets} were non-isomorphic
by pointing out differences in the sizes of their orbits. This property
is stronger than just being non-isomorphic, and we give it a name.
\begin{defn}
\label{def:unbalanced_pair}For a finite group $G$, a pair of finite
$G$-sets $X,Y$ is an \emph{unbalanced pair }if they are linearly
equivalent (i.e., $\mathbb{C}\left[X\right]\cong\mathbb{C}\left[Y\right]$
as $\mathbb{C}G$-modules), and if in addition they differ in the
sizes of their orbits, i.e., for some $n$ the number of orbits of
size $n$ in $X$ and the number of such orbits in $Y$ are different.\end{defn}
\begin{rem}
\label{rem:No-unbalanced-Sunada}Since the size of a $G$-set $X$
equals $\dim\mathbb{C}\left[X\right]$, and the number of orbits in
$X$ equals $\dim\left(\mathbb{C}\left[X\right]^{G}\right)$ %
\footnote{$V^{G}$ denotes the $G$-invariant part of a representation $V$:
$V^{G}=\left\{ v\in V\,\middle|\, gv=v\:\forall g\in G\right\} $.%
}, linearly equivalent $G$-sets necessarily have the same size and
number of orbits. Thus, there are no unbalanced pairs in which one
of the sets is transitive, and in particular there are no unbalanced
Sunada pairs.\end{rem}
\begin{thm}
\label{thm:isospectral_nonisom}If $X,Y$ is an unbalanced pair of
$G$-sets, then for any faithful action of $G$ on a compact connected
manifold $M$ the manifolds (or orbifolds) $M\times_{G}X$ and $M\times_{G}Y$
are isospectral and non-isometric.\end{thm}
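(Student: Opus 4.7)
Isospectrality is immediate: since $X,Y$ are linearly equivalent by definition of an unbalanced pair, Corollary \ref{cor:isospectrality} yields $\Spec_{M\times_{G}X}=\Spec_{M\times_{G}Y}$. The substance of the theorem lies in the non-isometry, which the plan is to establish by showing that the multi-set of volumes of connected components of $M\times_{G}X$ differs from that of $M\times_{G}Y$.

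First I would fix decompositions $X\cong\bigcup_{i}\nicefrac{G}{H_{i}}$ and $Y\cong\bigcup_{j}\nicefrac{G}{K_{j}}$, which give isometries $M\times_{G}X\cong\bigcup_{i}\nicefrac{M}{H_{i}}$ and $M\times_{G}Y\cong\bigcup_{j}\nicefrac{M}{K_{j}}$. Since $M$ is connected and $\nicefrac{M}{H_{i}}$ is a continuous image of $M$, each $\nicefrac{M}{H_{i}}$ is connected, hence is a single connected component of $M\times_{G}X$. Faithfulness of $G$ on $M$ restricts to faithfulness of each $H_{i}$ on $M$, and the standard orbifold integration formula then yields $\vol(\nicefrac{M}{H_{i}})=\vol(M)/|H_{i}|$. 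Translating to orbit sizes, an orbit of $X$ of size $n$ (so $[G:H_{i}]=n$) contributes a connected component of volume $n\cdot\vol(M)/|G|$, and similarly for $Y$.

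By Definition \ref{def:unbalanced_pair}, there is some $n$ for which the numbers of size-$n$ orbits of $X$ and of $Y$ differ; equivalently, the numbers of connected components of volume $n\cdot\vol(M)/|G|$ in $M\times_{G}X$ and in $M\times_{G}Y$ differ. Since any Riemannian (respectively orbifold) isometry bijects connected components and preserves volume, no isometry between $M\times_{G}X$ and $M\times_{G}Y$ can exist.

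The only point requiring care---more bookkeeping than conceptual---is the orbifold volume formula when the $H_{i}$ have fixed points: one needs the faithfulness of the $G$-action on $M$ to guarantee that each $H_{i}$ still acts effectively, so that $\nicefrac{M}{H_{i}}$ is a genuine orbifold of volume $\vol(M)/|H_{i}|$ (without faithfulness the kernel of $H_{i}\curvearrowright M$ would inflate the volume by its order). An elementary measure-theoretic alternative is to push the product measure on $M\times X$ (with counting measure on $X$) down to $M\times_{G}X$; this delivers the desired volume on each component and bypasses the orbifold formalism.
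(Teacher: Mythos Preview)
Your proof is correct and follows essentially the same route as the paper: invoke Corollary~\ref{cor:isospectrality} for isospectrality, decompose $X$ and $Y$ into coset spaces, identify the connected components of $M\times_{G}X$ with the $\nicefrac{M}{H_{i}}$ via connectedness of $M$, use faithfulness to get $\vol\nicefrac{M}{H_{i}}=\vol M/|H_{i}|$, and conclude that the multiset of component volumes distinguishes the two spaces. Your added remarks on why faithfulness descends to each $H_{i}$ and the measure-pushforward alternative are sound elaborations, but the argument itself matches the paper's.
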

\begin{proof}
Isospectrality was obtained in Corollary \ref{cor:isospectrality}.
To show that $M\times_{G}X$ and $M\times_{G}Y$ cannot be isometric,
we choose $H_{i}$ such that $X\cong\bigcup\nicefrac{G}{H_{i}}$,
and observe that
\begin{itemize}
\item Since $M$ is connected, $\left\{ \nicefrac{M}{H_{i}}\right\} $ form
the connected components of $M\times_{G}X$.
\item Since $G$ acts faithfully and $M$ is connected, $\vol\nicefrac{M}{H_{i}}=\frac{\vol M}{\left|H_{i}\right|}$.
\end{itemize}
Thus, the sizes of orbits in $X$ correspond to the volumes of connected
components in $M\times_{G}X$ %
\footnote{This correspondence between sizes of orbits and volumes of components
is apparent in Figures \ref{fig:Z2xZ2_Gsets} and \ref{fig:Z2_torus_pair}.%
}. Therefore, if $X$ and $Y$ form an unbalanced pair then $M\times_{G}X$
and $M\times_{G}Y$ differ in the volumes of their connected components.
To be precise, if $X$ and $Y$ have different numbers of orbits of
size $n$, then $M\times_{G}X$ and $M\times_{G}Y$ have different
numbers of connected components of volume $\frac{n\cdot\vol M}{\left|G\right|}$.
\end{proof}

\subsection{\label{sub:Rings}The Burnside ring and the lattice of isospectral
quotients}

A nice point of view is attained from $\Omega\left(G\right)$, the
Burnside ring of the group $G$. Its elements are formal differences
of isomorphism classes of finite G-sets, i.e., $X-Y$, where $X$
and $Y$ are finite $G$-sets, with $X-Y=X'-Y'$ whenever $X\cup Y'\cong X'\cup Y$.
The operations in $\Omega\left(G\right)$ are disjoint union and cartesian
product (extended to formal differences by distributivity). If we
fix representatives $H_{1},\ldots,H_{r}$ for the conjugacy classes
of subgroups in $G$, the classification of $G$-sets (Section \ref{sub:G-sets-classification})
tells us that $\Omega\left(G\right)=\left\{ \sum_{i=1}^{r}n_{i}\cdot\nicefrac{G}{H_{i}}\,\middle|\, n_{i}\in\mathbb{Z}\right\} $,
so that as an abelian group $\Omega\left(G\right)^{+}\cong\mathbb{Z}^{r}$
with $\left\{ \nicefrac{G}{H_{i}}\right\} _{i=1}^{r}$ being a basis.

Now, instead of looking at a pair of $G$-sets $\left(X,Y\right)$,
we look at the element $X-Y$ in $\Omega\left(G\right)$. First, we
note that some information is lost: for any $G$-set $Z$, the pair
$\left(X,Y\right)$ and the pair $\left(X'=X\cup Z,Y'=Y\cup Z\right)$
both correspond to the same element in $\Omega\left(G\right)$, i.e.,
$X-Y=X'-Y'$. Second, we notice this is in fact desirable. In order
to produce elegant isospectral pairs, one would like to {}``cancel
out'' isometric connected components shared by two isospectral manifolds
(as in \cite{chapman1995drums}), and the pair $M\times_{G}X'$, $M\times_{G}Y'$
is just the pair $M\times_{G}X$, $M\times_{G}Y$ with each manifold
added $M\times_{G}Z$.

Thus, we would like to look at \emph{reduced pairs}, pairs of $G$-sets
$X,Y$ which share no isomorphic sub-$G$-sets (equivalently, no isomorphic
orbits). The map $\left(X,Y\right)\mapsto X-Y$ gives a correspondence
between reduced pairs and the elements of $\Omega\left(G\right)$%
\footnote{Just like the map $\left(x,y\right)\mapsto\frac{x}{y}$ gives a correspondence
between reduced pairs of positive integers ($x,y\in\mathbb{N}$ such
that $\gcd\left(x,y\right)=1$), and positive rationals.%
}. Since $X\cong Y$ if and only if $X-Y=0$, nonzero elements in $\Omega\left(G\right)$
correspond to reduced pairs of non-isomorphic $G$-sets, and $0$
corresponds to the (reduced) pair $\left(\varnothing,\varnothing\right)$.

A second ring of interest is $R\left(G\right)$, the representation
ring of $G$. Its elements are formal differences of isomorphism classes
of complex representations of $G$, with the operations being direct
sum and tensor product. $R\left(G\right)$ also denotes the ring of
virtual characters of $G$, which is isomorphic to the representation
ring (see, e.g., \cite{serre1977linear})%
\footnote{As an abelian group $R\left(G\right)$ can also be identified with
$K_{0}\mathbb{C}G$.%
}. There is a ring homomorphism from $\Omega\left(G\right)$ into $R\left(G\right)$,
given by $X\mapsto\mathbb{C}\left[X\right]$ (or $X\mapsto\chi_{\mathbb{C}\left[X\right]}$,
considering $R\left(G\right)$ as the character ring). We denote the
kernel of this homomorphism by $\mathcal{L}\left(G\right)$, and say
that its elements are \emph{linearly trivial}. The formal difference
$X-Y$ is in $\mathcal{L}\left(G\right)$ if and only if $\mathbb{C}\left[X\right]\cong\mathbb{C}\left[Y\right]$,
so that we have a correspondence between linearly trivial elements
in $\Omega\left(G\right)$ and reduced pairs of linearly equivalent
$G$-sets. 

Since $\mathcal{L}\left(G\right)$, the ideal of linearly trivial
elements, is a subgroup of the free abelian group $\Omega\left(G\right)^{+}\cong\mathbb{Z}^{r}$,
it is also free abelian: $\mathcal{L}\left(G\right)\cong\mathbb{Z}^{m}$
for some $m\leq r$. This means that we can find a $\mathbb{Z}$-basis
for $\mathcal{L}\left(G\right)$ (we demonstrate how to compute such
a basis in Section \ref{sec:Computation}). This gives a lattice of
linearly equivalent reduced pairs, as follows: if $\left\{ X_{i}-Y_{i}\right\} _{i=1..m}$
is a basis for $\mathcal{L}\left(G\right)$, and we define for $\bar{n}=\left(n_{1},\ldots,n_{m}\right)\in\mathbb{Z}^{m}$
\begin{align*}
X_{\bar{n}} & =\left(\bigcup_{\, i\,:\, n_{i}>0}n_{i}X_{i}\right)\cup\left(\bigcup_{\, i\,:\, n_{i}<0}\left|n_{i}\right|Y_{i}\right)\\
Y_{\bar{n}} & =\left(\bigcup_{\, i\,:\, n_{i}<0}\left|n_{i}\right|X_{i}\right)\cup\left(\bigcup_{\, i\,:\, n_{i}>0}n_{i}Y_{i}\right)
\end{align*}
then every reduced pair of linearly equivalent $G$-sets $\left(X,Y\right)$
is obtained by canceling out common factors in $\left(X_{\bar{n}},Y_{\bar{n}}\right)$,
for a unique $\bar{n}\in\mathbb{Z}^{m}$. 

Given an action of $G$ on a manifold $M$, we associate with every
$G$-set $X$ a manifold, namely $M\times_{G}X$. The lattice of linearly
equivalent pairs then maps to a lattice of isospectral pairs (see
the example in Section \ref{sec:Computation}). For a general manifold
$M$, this might be only a sublattice of the lattice of isospectral
quotients, which can be described as follows. We pull the spectrum
function backwards to $\Omega\left(G\right)$, defining $\Spec_{X-Y}=\Spec_{M\times_{G}X}-\Spec_{M\times_{G}Y}$
(so that we have $\Spec:\Omega\left(G\right)\rightarrow\mathbb{Z}^{\mathbb{R}}$).
Isospectral pairs of the form $\left(M\times_{G}X,M\times_{G}Y\right)$
are exactly those for which $X-Y\in\ker\Spec$, and Corollary \ref{cor:isospectrality}
states that this kernel (for any $M$) contains $\mathcal{L}\left(G\right)$.

\section{\label{sec:Construction}Construction of unbalanced pairs}

Our objective in this section is to find unbalanced pairs. That is,
given a group $G$, to find two $G$-sets $X,Y$ which differ in the
number of orbits of some size, and such that $\mathbb{C}\left[X\right]\cong\mathbb{C}\left[Y\right]$
as $\mathbb{C}G$-modules. We shall do so by {}``balancing'' unions
of transitive $G$-sets, which correspond to coset spaces of the form
$\nicefrac{G}{H}$. For every subgroup $H\leq G$ we denote by $\mathcal{S}_{H}$
the function
\begin{equation}
\mathcal{S}_{H}\left(g\right)=\chi_{\mathbb{C}\left[\nicefrac{G}{H}\right]}\left(g\right)=\left|\fix_{\nicefrac{G}{H}}\left(g\right)\right|=\frac{\left|g^{G}\cap H\right|\left|\vphantom{g^{G}}C_{G}\left(g\right)\right|}{\left|H\right|}\label{eq:quasiregular_def}
\end{equation}
$\mathbb{C}\left[\nicefrac{G}{H}\right]$ is sometimes called the
\emph{quasiregular representation} of $G$ on $H$, and $\mathcal{S}_{H}$
is thus the \emph{quasiregular character}. It also bears the names
$\mathbf{1}_{H}^{G}$, $\mathbf{1}\negthickspace\uparrow_{H}^{G}$,
or $\mathrm{Ind}_{H}^{G}\mathbf{1}$, being the induction of the trivial
character of $H$ to $G$. Lastly, it is the image of $\nicefrac{G}{H}$
under the map $\Omega\left(G\right)\rightarrow R\left(G\right)$,
when the latter is regarded as the ring of virtual characters of $G$.

In light of Proposition \ref{pro:equivalence_of_linequiv}, we shall
seek $H_{i}$, $K_{i}$ such that $\sum_{i}\mathcal{S}_{H_{i}}=\sum_{i}\mathcal{S}_{K_{i}}$,
and then check that the obtained linearly equivalent pair is unbalanced.
We use a few easy calculations:
\begin{enumerate}
\item For the trivial subgroup $1\leq G$, we have 
\begin{equation}
\mathcal{S}_{1}\left(g\right)=\begin{cases}
\left|G\right| & g=e\\
0 & g\neq e
\end{cases}\label{eq:regular_character}
\end{equation}

\item For $H=G$ we have 
\begin{equation}
\mathcal{S}_{G}\equiv1\label{eq:trivial_character}
\end{equation}

\item For any $H$ we have 
\begin{equation}
\mathcal{S}_{H}\left(e\right)=\left[G:H\right]\label{eq:character_index}
\end{equation}

\item For $G$ abelian $g^{G}=\left\{ g\right\} $ and $C_{G}\left(g\right)=G$,
so that $\mathcal{S}_{H}=\left[G:H\right]\cdot\mathbf{1}_{H}$, i.e.,
\begin{equation}
\mathcal{S}_{H}\left(g\right)=\begin{cases}
\left[G:H\right] & g\in H\\
0 & g\notin H
\end{cases}\label{eq:Abelian_nu}
\end{equation}

\end{enumerate}

\subsection{\label{sub:Cyclic-groups}Cyclic groups}

Finite cyclic groups have no unbalanced pairs. This follows from the
following:
\begin{prop}
\label{pro:Finite-cyclic-groups}If $G$ is finite cyclic, linearly
equivalent $G$-sets are isomorphic.\end{prop}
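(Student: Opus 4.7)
The plan is to use the classification of $G$-sets together with the fact that in a finite cyclic group every subgroup is uniquely determined by its order. Write $G$ as cyclic of order $n$, and for each divisor $d\mid n$ let $H_d\le G$ denote the unique subgroup of order $d$. By the classification recalled in Section \ref{sub:G-sets-classification}, together with the fact that two subgroups of an abelian group are conjugate iff equal, we can write uniquely
\[
X\cong\bigcup_{d\mid n}a_{d}\cdot\nicefrac{G}{H_{d}},\qquad Y\cong\bigcup_{d\mid n}b_{d}\cdot\nicefrac{G}{H_{d}},
\]
with $a_d,b_d\in\mathbb{Z}_{\ge 0}$. The goal is to show that linear equivalence forces $a_d=b_d$ for all $d\mid n$.

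Next, I would use the abelian fixed-point formula \eqref{eq:Abelian_nu}, which says $|\fix_{\nicefrac{G}{H_d}}(g)|=n/d$ when $g\in H_d$ and $0$ otherwise. If $g$ has order $e$, then $\langle g\rangle=H_e$ and $g\in H_d\iff H_e\subseteq H_d\iff e\mid d$. Hence
\[
|\fix_{X}(g)|=\sum_{d\,:\,e\mid d}a_{d}\cdot\tfrac{n}{d},
\]
and likewise for $Y$. By Proposition \ref{pro:equivalence_of_linequiv}, linear equivalence of $X$ and $Y$ amounts to the equalities
\[
\sum_{d\,:\,e\mid d}a_{d}\cdot\tfrac{n}{d}=\sum_{d\,:\,e\mid d}b_{d}\cdot\tfrac{n}{d}\qquad(\text{for every }e\mid n),
\]
since every divisor $e$ of $n$ is realized as the order of some element of $G$.

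The final step is to observe that this is an upper-triangular linear system in the unknowns $c_d:=a_d-b_d$: order the divisors of $n$ from largest to smallest, and note that the equation indexed by $e$ contains $c_e$ (with nonzero coefficient $n/e$) and otherwise only $c_d$ with $e\mid d$ and $d>e$. Descending induction on $e$ (starting from $e=n$, which gives $c_n=0$ immediately) yields $c_d=0$ for all $d\mid n$, so $a_d=b_d$ and hence $X\cong Y$ as $G$-sets.

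I do not anticipate any real obstacle: the only substantive ingredient beyond the classification is the triangularity of the fixed-point-vs-orbit-multiplicity matrix, which is essentially Möbius inversion on the divisor lattice of $n$ and could alternatively be phrased via Frobenius reciprocity (the quasiregular characters $\mathcal{S}_{H_d}$ are linearly independent because the sets of characters of $G$ appearing in each $\mathbb{C}[\nicefrac{G}{H_d}]$ distinguish the $H_d$). The only subtlety worth flagging is the step where one passes from ``every element $g$ fixes the same number of points'' to ``every subgroup $H_e$ fixes the same number of points'', which uses that cyclic subgroups of $G$ are generated by single elements — exactly the place where the cyclic hypothesis is essential.
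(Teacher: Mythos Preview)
Your proof is correct and is essentially the same argument as the paper's: parametrize the subgroups by divisors of $n$, use the abelian fixed-point formula \eqref{eq:Abelian_nu}, and observe that the resulting matrix (indexed by divisors) is triangular with nonzero diagonal, giving linear independence of the quasiregular characters. The paper phrases this as linear independence of $\{\mathcal{S}_{H_d}\}$ restricted to the divisors, while you spell out the back-substitution explicitly; the content is identical.
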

\begin{proof}
Let $G=\nicefrac{\mathbb{Z}}{n\mathbb{Z}}$, and $D=\left\{ d\,\middle|\, d>0,\, d\mid n\right\} $.
The subgroups of $G$ are $H_{d}=\left\langle d\right\rangle $ for
$d\in D$, and by \eqref{eq:Abelian_nu} $\mathcal{S}_{H_{d}}=\frac{n}{d}\cdot\boldsymbol{1}_{H_{d}}$.
A non-trivial pair of linearly equivalent $G$-sets corresponds to
two different $\mathbb{N}$-combinations of $\left\{ \mathcal{S}_{H_{d}}\right\} _{d\in D}$
that agree as functions. Finding such a pair is equivalent to finding
a nonzero $\mathbb{Z}$-combination of $\left\{ \mathcal{S}_{H_{d}}\right\} _{d\in D}$
which vanishes. However, the matrix $\left(\mathcal{S}_{H_{d}}\left(d'\right)\right)_{d,d'\in D}$
is upper triangular with non-vanishing diagonal, which means that
$\left\{ \mathcal{S}_{H_{d}}\Big|_{D}\right\} _{d\in D}$ are linearly
independent over $\mathbb{Q}$, hence so are $\left\{ \mathcal{S}_{H_{d}}\right\} _{d\in D}$.
\end{proof}

\subsection{\label{sub:Case-I}$G=\nicefrac{\mathbb{Z}}{p\mathbb{Z}}\times\nicefrac{\mathbb{Z}}{p\mathbb{Z}}$}

Here we generalize the pair which appeared in Sections \ref{sub:ExampleZ2_Z2}
and \ref{sub:Z2_Z2_Gsets}. Let $p$ be a prime. $G=\nicefrac{\mathbb{Z}}{p\mathbb{Z}}\times\nicefrac{\mathbb{Z}}{p\mathbb{Z}}$
has $p+1$ subgroups of size (and index) $p$: $H_{\lambda}=\left\{ \left(x,y\right)\,\middle|\,\frac{x}{y}=\lambda\right\} $,
where $\lambda\in P^{1}\left(\mathbb{F}_{p}\right)=\left\{ 0,1,..,p-1,\infty\right\} $.
Every non-identity element in $G$ appears in exactly one of these,
and we obtain by \eqref{eq:character_index} and \eqref{eq:Abelian_nu}
\[
\sum_{\lambda\in P^{1}\left(\mathbb{F}_{p}\right)}\mathcal{S}_{H_{\lambda}}\left(g\right)=\begin{cases}
p\left(p+1\right) & g=e\\
p & g\neq e
\end{cases}\:.
\]
Consulting \eqref{eq:regular_character} and \eqref{eq:trivial_character},
we find that this is the same as $p\cdot\mathcal{S}_{G}+\mathcal{S}_{1}$,
so there is linear equivalence between 
\[
X=\bigcup\limits _{\lambda\in P^{1}\left(\mathbb{F}_{p}\right)}\nicefrac{G}{H_{\lambda}}\qquad\mathrm{and}\qquad Y=\underbrace{\mathbf{1}\cup\ldots\cup\mathbf{1}}_{p}\cup\, G\:,
\]
where $\mathbf{1}$ denotes the $G$-set with one element (corresponding
to $\nicefrac{G}{G}$). Obviously, this is an unbalanced pair ($X$
has $p+1$ orbits of size $p$, and $Y$ has one orbit of size $p^{2}$
and $p$ orbits with a single element). Figure \ref{fig:Z2xZ2_Gsets}
shows $X,Y$ for $p=2$ (by their Schreier graphs with respect to
the standard basis of $\nicefrac{\mathbb{Z}}{2\mathbb{Z}}\times\nicefrac{\mathbb{Z}}{2\mathbb{Z}}$).

\subsubsection{\label{sub:Application---Hecke}Application - Hecke pairs}

We now let 
\[
G=\left\langle \sigma,\tau\,\middle|\,\sigma^{p}=\tau^{p}=1,\sigma\tau=\tau\sigma\right\rangle \cong\nicefrac{\mathbb{Z}}{p\mathbb{Z}}\times\nicefrac{\mathbb{Z}}{p\mathbb{Z}}
\]
act on the torus $T=\nicefrac{\mathbb{R}^{2}}{\mathbb{Z}^{2}}$ by
the rotations $\sigma\cdot\left(x,y\right)=\left(x,y+\frac{1}{p}\right)$
and $\tau\cdot\left(x,y\right)=\left(x+\frac{1}{p},y\right)$. From
the unbalanced pair $X,Y$ constructed for $\nicefrac{\mathbb{Z}}{p\mathbb{Z}}\times\nicefrac{\mathbb{Z}}{p\mathbb{Z}}$
above one obtains the isospectral pair $T\times_{G}X$ and $T\times_{G}Y$,
each a union of $p+1$ tori. These examples were constructed using
different techniques by Doyle and Rossetti, who baptized them {}``Hecke
pairs'' \cite{doyle2011laplace}. The cases $p=2,3,5$ are illustrated
in Figure \ref{fig:2-3-5-tori}. One can verify that the analogue
pair for $p=4$, for example, is not isospectral - the reason is that
unlike in the prime case the subgroups 
\[
H_{\lambda}=\begin{cases}
\left\{ \left(x,\lambda x\right)\,\middle|\, x\in\nicefrac{\mathbb{Z}}{4\mathbb{Z}}\right\}  & \lambda=0..3\\
\left\{ \left(0,x\right)\,\middle|\, x\in\nicefrac{\mathbb{Z}}{4\mathbb{Z}}\right\}  & \lambda=\infty
\end{cases}
\]
do not cover $\left(\nicefrac{\mathbb{Z}}{4\mathbb{Z}}\times\nicefrac{\mathbb{Z}}{4\mathbb{Z}}\right)\backslash\left\{ 0\right\} $
evenly.

\FigBesEnd 

\begin{figure}[h]
\centering{}\includegraphics[width=1\columnwidth]{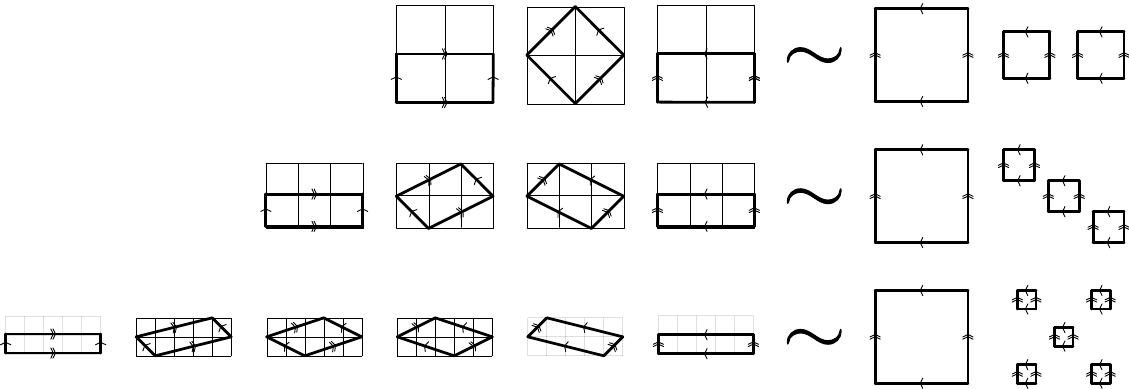}\caption{\label{fig:2-3-5-tori}Isospectral pairs consisting of unions of tori,
obtained from the action of $\nicefrac{\mathbb{Z}}{p\mathbb{Z}}\times\nicefrac{\mathbb{Z}}{p\mathbb{Z}}$
on the torus $\nicefrac{\mathbb{R}^{2}}{\mathbb{Z}^{2}}$, for $p=2,3,5$.
Grids are drawn to clarify the sizes.}
\end{figure}

\begin{rem*}
Since the spectrum of a flat torus is represented by a quadratic form,
isospectrality between flat tori can be interpreted as equality in
the representation numbers of forms%
\footnote{This insight (in the opposite direction) led Milnor to the first construction
of isospectral manifolds \cite{milnor1964eigenvalues}.%
}. For example, isospectrality in the case $p=2$ (Figure \ref{fig:2-3-5-tori},
top) asserts that together the quadratic forms $4m^{2}+n^{2}$, $2m^{2}+2n^{2}$
and $4m^{2}+n^{2}$ represent (over the integers) every value the
same number of times as do $m^{2}+n^{2}$, $4m^{2}+4n^{2}$, and $4m^{2}+4n^{2}$
together.
\end{rem*}

\subsection{\label{sub:Case-II}$G=\nicefrac{\mathbb{Z}}{q\mathbb{Z}}\rtimes\nicefrac{\mathbb{Z}}{p\mathbb{Z}}$}

Now let $G$ be the non-abelian group of size $pq$, where $p$ and
$q$ are primes such that $q\equiv1\left(\mathrm{mod}\, p\right)$.
$G$ has one subgroup $Q$ of size $q$, and $q$ subgroups $P_{1},P_{2},\ldots,P_{q}$
of size $p$. Since $Q$ is normal we have 
\[
g^{G}\cap Q=\begin{cases}
g^{G} & g\in Q\\
\varnothing & g\notin Q
\end{cases}\qquad\Rightarrow\qquad\mathcal{S}_{Q}\left(g\right)=\begin{cases}
p & g\in Q\\
0 & g\notin Q
\end{cases}\:.
\]
Every non-identity element of $G$ generates its entire centralizer,
for otherwise it would be in the center. Thus for $g\neq e$ 
\[
\sum_{i=1}^{q}\mathcal{S}_{P_{i}}\left(g\right)=\frac{\left|C_{G}\left(g\right)\right|}{p}\sum_{i=1}^{q}\left|g^{G}\cap P_{i}\right|=\frac{\left|C_{G}\left(g\right)\right|}{p}\cdot\left|g^{G}\cap\left(G\backslash Q\right)\right|=\begin{cases}
0 & g\in Q\backslash\left\{ e\right\} \\
q & g\notin Q
\end{cases}
\]
but since $P_{i}$ are all conjugate we have $\mathcal{S}_{P_{i}}=\mathcal{S}_{P_{1}}$
for all $i$. Denoting $P=P_{1}$, we have by the above and \eqref{eq:character_index}

\[
\mathcal{S}_{P}\left(g\right)=\begin{cases}
q & g=e\\
0 & g\in Q\backslash e\\
1 & g\notin Q
\end{cases}
\]
and we find that 
\[
\left(p\cdot\mathcal{S}_{P}+\mathcal{S}_{Q}\right)\left(g\right)=\left(p\cdot\mathcal{S}_{G}+\mathcal{S}_{1}\right)\left(g\right)=\begin{cases}
pq+p & g=e\\
p & g\neq e
\end{cases}
\]
which gives us the unbalanced pair 
\[
X=\underbrace{\nicefrac{G}{P}\cup\ldots\cup\nicefrac{G}{P}}_{p}\cup\,\nicefrac{G}{Q}\qquad\mathrm{and}\qquad Y=\underbrace{\mathbf{1}\vphantom{\nicefrac{G}{P}}\cup\ldots\cup\mathbf{1}}_{p}\cup\, G\,.
\]
This pair was discovered and used for constructing isospectral surfaces
by Hillairet \cite{hillairet2008spectral}.

\subsubsection{\label{sub:Example-dihedral}Example - dihedral groups}

A nice family of groups of the form $\nicefrac{\mathbb{Z}}{q\mathbb{Z}}\rtimes\nicefrac{\mathbb{Z}}{p\mathbb{Z}}$
is formed by the dihedral groups of order $2q$, where $q$ is an
odd prime. $D_{q}=\left\langle \sigma,\tau\,\middle|\,\sigma^{q},\tau^{2},\left(\sigma\tau\right)^{2}\right\rangle $
acts by symmetries on the regular $q$-gon (say, with Neumann boundary
conditions). In this case, the unbalanced pair we obtained above is
$X=\nicefrac{D_{q}}{\left\langle \tau\right\rangle }\cup\nicefrac{D_{q}}{\left\langle \tau\right\rangle }\cup\nicefrac{D_{q}}{\left\langle \sigma\right\rangle }$,
$Y=\mathbf{1}\cup\mathbf{1}\cup D_{q}$, and we obtain for each $q$
an isospectral pair consisting of six orbifolds, five of which are
planar domains with Neumann boundary conditions, and the sixth (the
quotient by $\left\langle \sigma\right\rangle $) a $\frac{2\pi}{q}$-cone.
Figure \ref{fig:D5_quotients} shows the case $q=5$.

\FigBesBeg 
\begin{figure}[h]
\begin{centering}
\includegraphics[scale=1.1]{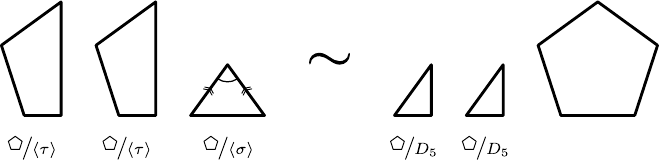}
\par\end{centering}

\caption{\label{fig:D5_quotients}An isospectral pair obtained from the action
of $D_{5}\cong\nicefrac{\mathbb{Z}}{5\mathbb{Z}}\rtimes\nicefrac{\mathbb{Z}}{2\mathbb{Z}}$
on a regular pentagon. All boundary conditions are Neumann.}
 
\end{figure}

\subsection{Non-cyclic groups}

A group $H$ is said to be \emph{involved} in a group $G$ if there
exist some $L\trianglelefteq K\leq G$ such that $\nicefrac{K}{L}\cong H$.
\begin{prop}
\label{pro:involvement}If a group $H$ which has an unbalanced pair
is involved in $G$, then $G$ has an unbalanced pair.\end{prop}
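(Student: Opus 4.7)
The plan is to construct an unbalanced pair of $G$-sets from an unbalanced pair of $H$-sets using the two natural operations at our disposal: inflation along the quotient $\pi\colon K\twoheadrightarrow K/L\cong H$, and extension of scalars (induction) from $K$ to $G$. Given an unbalanced pair $(X,Y)$ of $H$-sets, I would view $X$ and $Y$ as $K$-sets on which $L$ acts trivially (via $\pi$), and then set $\tilde{X}=G\times_{K}X$ and $\tilde{Y}=G\times_{K}Y$. The claim is that $(\tilde{X},\tilde{Y})$ is an unbalanced pair of $G$-sets.

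For linear equivalence, I would use that $\mathbb{C}[G\times_{K}X]\cong\mathrm{Ind}_{K}^{G}\mathbb{C}[X]$ as $\mathbb{C}G$-modules, a standard identification that also follows from the adjunctions \eqref{eq:tensor_hom_adj} and \eqref{eq:free-CG-module} together with \eqref{eq:extension_adjointness}. Inflation carries the isomorphism $\mathbb{C}[X]\cong\mathbb{C}[Y]$ of $\mathbb{C}H$-modules to an isomorphism of $\mathbb{C}K$-modules, and induction, being a functor, then produces $\mathbb{C}[\tilde{X}]\cong\mathbb{C}[\tilde{Y}]$ as $\mathbb{C}G$-modules, so $\tilde{X}$ and $\tilde{Y}$ are linearly equivalent.

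To analyze orbit sizes, write $X\cong\bigcup_{i}H/H'_{i}$ with $[H:H'_{i}]=n_{i}$. Inflation presents $X$ as $\bigcup_{i}K/\pi^{-1}(H'_{i})$ with the same orbit sizes $n_{i}$, and then induction yields $\tilde{X}\cong\bigcup_{i}G/\pi^{-1}(H'_{i})$, whose orbits have sizes $[G:\pi^{-1}(H'_{i})]=[G:K]\cdot n_{i}$. Hence the multiset of orbit sizes of $\tilde{X}$ is that of $X$ rescaled by the factor $[G:K]$, and similarly for $\tilde{Y}$. Since $(X,Y)$ is unbalanced, there is some $n$ at which the orbit-size counts of $X$ and $Y$ differ; then the corresponding counts for $\tilde{X}$ and $\tilde{Y}$ differ at $[G:K]\cdot n$, so $(\tilde{X},\tilde{Y})$ is unbalanced. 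I do not anticipate any real obstacle here: the argument is essentially a two-step functorial construction, and the only care needed is to keep straight the identifications $G\times_{K}(K/K')\cong G/K'$ and $\mathbb{C}[G\times_{K}X]\cong\mathrm{Ind}_{K}^{G}\mathbb{C}[X]$.
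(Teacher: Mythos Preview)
Your proof is correct and follows essentially the same approach as the paper. The only difference is organizational: the paper reduces to the two cases $H\leq G$ and $G\twoheadrightarrow H$ and treats each separately, whereas you compose inflation along $K\twoheadrightarrow H$ with induction $G\times_{K}(-)$ in a single step; the underlying arguments (the adjunctions \eqref{eq:free-CG-module} and \eqref{eq:extension_adjointness} for linear equivalence, and the rescaling of orbit sizes by the index) are identical.
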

\begin{proof}
It is enough to assume that $H$ is either a subgroup or a quotient
of $G$. Assume first that $H\leq G$. If $X,Y$ is an unbalanced
pair of $H$-sets, the induced $G$-sets $G\times_{H}X$ and $G\times_{H}Y$
(see Section \ref{sub:Tensor-product-of-Gsets}) form an unbalanced
pair as well:
\begin{itemize}
\item They are linearly equivalent: we have natural isomorphisms
\begin{multline*}
\Hom_{\mathbb{C}G}\left(\mathbb{C}\left[G\times_{H}X\right],\_\right)\cong\Hom_{G}\left(G\times_{H}X,\_\right)\\
\cong\Hom_{H}\left(X,\_\right)\cong\Hom_{\mathbb{C}H}\left(\mathbb{C}\left[X\right],\_\right)
\end{multline*}
where the first and last isomorphisms are by \eqref{eq:free-CG-module},
and the middle one is by \eqref{eq:extension_adjointness}. Since
$\mathbb{C}\left[X\right]\cong\mathbb{C}\left[Y\right]$ as $\mathbb{C}H$-modules,
we obtain that $\mathbb{C}\left[G\times_{H}X\right]\cong\mathbb{C}\left[G\times_{H}Y\right]$
as $\mathbb{C}G$-modules. 
\item The sizes of orbits in $G\times_{H}X$ are the sizes of orbits in
$X$ multiplied by $\left[G:H\right]$, since if $X\cong\bigcup\nicefrac{H}{H_{i}}$
is a decomposition of $X$ into $H$-orbits then 
\begin{multline*}
G\times_{H}X\cong G\times_{H}\left(\bigcup\nicefrac{H}{H_{i}}\right)\cong\bigcup G\times_{H}\nicefrac{H}{H_{i}}\\
\cong\bigcup G\times_{H}\times H\times_{H_{i}}\mathbf{1}\cong\bigcup G\times_{H_{i}}\mathbf{1}\cong\bigcup\nicefrac{G}{H_{i}}
\end{multline*}
is a decomposition of $G\times_{H}X$ into $G$-orbits.
\end{itemize}
Assume now that $\pi:G\twoheadrightarrow H$ is an epimorphism. An
$H$-set $X$ has a $G$-set structure by $gx=\pi\left(g\right)x$,
and an unbalanced pair of $H$-sets $X,Y$ is also an unbalanced pair
of $G$-sets: since $G$ realizes the same permutations in $\mathrm{Sym}\left(X\right)$
as does $H$, a linear $H$-equivariant isomorphism $\mathbb{C}\left[X\right]\cong\mathbb{C}\left[Y\right]$
is also $G$-equivariant, and the orbits in $X$ as a $G$-set and
as an $H$-set are the same.\end{proof}
\begin{rem*}
If $G$ acts on a manifold $M$, and $X$ is an $H$-set for some
$H\leq G$, then we have
\[
M\times_{G}\left(G\times_{H}X\right)\cong\left(M\times_{G}G\right)\times_{H}X\cong M\times_{H}X
\]
i.e., the induced $G$-set gives the same manifold as does the original
$H$-set.\end{rem*}
\begin{thm}
\label{thm:non-cyclic-pairs}Every non-cyclic finite group has an
unbalanced pair.\end{thm}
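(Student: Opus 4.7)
The plan is to invoke Proposition \ref{pro:involvement}: Sections \ref{sub:Case-I} and \ref{sub:Case-II} exhibit unbalanced pairs for every group of the form $\nicefrac{\mathbb{Z}}{p\mathbb{Z}}\times\nicefrac{\mathbb{Z}}{p\mathbb{Z}}$ and for every semidirect product $\nicefrac{\mathbb{Z}}{q\mathbb{Z}}\rtimes\nicefrac{\mathbb{Z}}{p\mathbb{Z}}$ with $p,q$ prime and $q\equiv1\pmod p$, so it suffices to show that any non-cyclic finite group $G$ involves at least one group from these two families. I would split the argument according to the Sylow structure of $G$.

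Suppose first that some Sylow $p$-subgroup $P$ of $G$ is non-cyclic. By a classical theorem of Burnside, a non-cyclic $p$-group either contains $\nicefrac{\mathbb{Z}}{p\mathbb{Z}}\times\nicefrac{\mathbb{Z}}{p\mathbb{Z}}$ as a subgroup, or else $p=2$ and $P$ is a generalized quaternion group $Q_{2^{n}}$; in the latter case a direct computation shows that the abelianization $\nicefrac{P}{[P,P]}$ is isomorphic to $\nicefrac{\mathbb{Z}}{2\mathbb{Z}}\times\nicefrac{\mathbb{Z}}{2\mathbb{Z}}$. Either way, $\nicefrac{\mathbb{Z}}{p\mathbb{Z}}\times\nicefrac{\mathbb{Z}}{p\mathbb{Z}}$ is involved in $P\leq G$, and Proposition \ref{pro:involvement} closes this case.

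In the complementary case every Sylow subgroup of $G$ is cyclic, so $G$ is a Z-group. By H\"older's classification, such a $G$ is metacyclic with a presentation $\langle a,b\mid a^{m}=b^{n}=1,\;bab^{-1}=a^{r}\rangle$ satisfying $\gcd(m,n)=1$ and $r^{n}\equiv 1\pmod m$; since $G$ is non-cyclic it is non-abelian, whence $r\not\equiv1\pmod m$. I would then pick a prime $q\mid m$ with $r\not\equiv1\pmod q$, followed by a prime $p\mid n$ with $r^{n/p}\not\equiv1\pmod q$. The subgroup of $G$ generated by $a^{m/q}$ and $b^{n/p}$ is isomorphic to $\nicefrac{\mathbb{Z}}{q\mathbb{Z}}\rtimes\nicefrac{\mathbb{Z}}{p\mathbb{Z}}$ with a non-trivial action, which forces $p\mid q-1$ and places it in the family of Section \ref{sub:Case-II}. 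The main obstacle is making this Z-group case precise: one must invoke the structure theorem and carefully extract primes realizing a non-trivial action of $\nicefrac{\mathbb{Z}}{p\mathbb{Z}}$ on $\nicefrac{\mathbb{Z}}{q\mathbb{Z}}$, whereas the non-cyclic-Sylow case follows directly from standard $p$-group theory.
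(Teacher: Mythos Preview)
Your treatment of the non-cyclic Sylow case is correct, though it invokes heavier machinery than necessary: rather than Burnside's classification of $p$-groups with a unique subgroup of order $p$ (and the separate abelianization computation for generalized quaternion groups), the paper simply observes that for any non-cyclic $p$-group $P$ the Frattini quotient $P/\Phi(P)$ is elementary abelian of rank at least two, so $\nicefrac{\mathbb{Z}}{p\mathbb{Z}}\times\nicefrac{\mathbb{Z}}{p\mathbb{Z}}$ is involved in $P$ directly.

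The Z-group case, however, has a genuine gap. You claim one can always choose a prime $p\mid n$ with $r^{n/p}\not\equiv 1\pmod q$, but this need not be possible. Take $G=\nicefrac{\mathbb{Z}}{3\mathbb{Z}}\rtimes\nicefrac{\mathbb{Z}}{4\mathbb{Z}}$ with $m=3$, $n=4$, $r=2$ (this is the dicyclic group of order~$12$, and all its Sylow subgroups are cyclic). Here the only choice is $q=3$, and the only prime dividing $n$ is $p=2$; but $r^{n/p}=2^{2}=4\equiv 1\pmod 3$, so $\langle a^{m/q},b^{n/p}\rangle=\langle a,b^{2}\rangle\cong\nicefrac{\mathbb{Z}}{6\mathbb{Z}}$ is abelian. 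In fact one checks that $b^{2}$ is the unique involution in $G$ and is central, so $G$ contains no non-abelian subgroup of order $pq$ at all: your attempt to locate $\nicefrac{\mathbb{Z}}{q\mathbb{Z}}\rtimes\nicefrac{\mathbb{Z}}{p\mathbb{Z}}$ as a \emph{subgroup} cannot succeed here.

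What goes wrong is that the action $\vartheta_{r}:\nicefrac{\mathbb{Z}}{n\mathbb{Z}}\to\mathrm{Aut}(\nicefrac{\mathbb{Z}}{m\mathbb{Z}})$ may have a non-trivial kernel, and then powers of $b$ lying in that kernel are central and obstruct the construction. The paper handles this by first passing to the quotient $G/Z(G)$ (noting that $0\times\ker\vartheta_{r}\leq Z(G)$, so one may assume $\vartheta_{r}$ injective), and only afterwards reducing $n$ and $m$ to primes by taking subgroups. In the example above, $G/\langle b^{2}\rangle\cong S_{3}=\nicefrac{\mathbb{Z}}{3\mathbb{Z}}\rtimes\nicefrac{\mathbb{Z}}{2\mathbb{Z}}$, which is of the required form. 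The moral is that in the Z-group case one genuinely needs ``involved in'' rather than ``subgroup of'', and your argument must allow a quotient step before extracting the primes.
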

\begin{proof}
Assume that $G$ is finite non-cyclic. If some $p$-Sylow group $P\leq G$
is not cyclic (in particular, if $G$ is abelian), then $\nicefrac{P}{\Phi\left(P\right)}$
contains $\nicefrac{\mathbb{Z}}{p\mathbb{Z}}\times\nicefrac{\mathbb{Z}}{p\mathbb{Z}}$
(here $\Phi\left(P\right)$ is the Frattini subgroup of $P$) and
we are done by Proposition \ref{pro:involvement} and Section \ref{sub:Case-I}.
Zassenhaus classified the groups whose Sylow subgroups are all cyclic
(\cite{hall1976theory}, 9.4.3). They are of the form 
\[
G_{m,n,r}=\left\langle a,b\,\middle|\, a^{m}=b^{n}=e,a^{b}=a^{r}\right\rangle =\nicefrac{\mathbb{Z}}{m\mathbb{Z}}\rtimes_{\vartheta_{r}}\nicefrac{\mathbb{Z}}{n\mathbb{Z}}
\]
for $m,n,r$ satisfying $\left(m,n\left(r-1\right)\right)=1$ (here
$\vartheta_{r}\left(1\right)\left(1\right)=r$, and $r^{n}\equiv1\left(\mathrm{mod}\, m\right)$
is implied to make $\vartheta_{r}$ a homomorphism). Since $0\times\ker\vartheta_{r}\leq Z\left(G\right)$,
and the quotient $\nicefrac{G}{Z\left(G\right)}$ is never cyclic
for nonabelian $G$, we can assume (by Proposition \ref{pro:involvement})
that $\vartheta_{r}$ is injective. We can also assume that $n$ is
prime, for otherwise for any nontrivial factor $k$ of $n$ we have
a proper subgroup $\left\langle a,b^{k}\right\rangle =\nicefrac{\mathbb{Z}}{m\mathbb{Z}}\rtimes_{\vartheta_{r^{k}}}\nicefrac{\mathbb{Z}}{\frac{n}{k}\mathbb{Z}}$
which is non-cyclic by the injectivity of $\vartheta_{r}$. We can
further assume that $m$ is prime. Otherwise, we can pick some prime
$q$ dividing $m$, and consider $\left\langle a^{\nicefrac{m}{q}},b\right\rangle $.
It is cyclic only if $\vartheta_{r}$ fixes $a^{\nicefrac{m}{q}}$,
i.e. $a^{\nicefrac{rm}{q}}=a^{\nicefrac{m}{q}}$, so that $m\mid\frac{m}{q}\left(r-1\right)$,
which is impossible since $\left(m,n\left(r-1\right)\right)=1$. Thus,
by Section \ref{sub:Case-II} we are done.
\end{proof}
Since unbalanced $G$-sets are in particular non-isomorphic, this
together with Proposition \ref{pro:Finite-cyclic-groups} give the
following:
\begin{cor}
\label{cor:Burnside-injective}For a finite group $G$, the map $\Omega\left(G\right)\rightarrow R\left(G\right)$
which takes a $G$-set $X$ to the representation $\mathbb{C}\left[X\right]$
is injective iff $G$ is cyclic.
\end{cor}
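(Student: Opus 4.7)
The plan is to observe that the corollary is essentially an immediate packaging of Proposition \ref{pro:Finite-cyclic-groups} and Theorem \ref{thm:non-cyclic-pairs}, mediated by the definition of $\mathcal{L}(G)$. The first thing I would do is rephrase injectivity: since $\Omega(G) \to R(G)$ is a homomorphism of abelian groups with kernel exactly $\mathcal{L}(G)$, injectivity is equivalent to $\mathcal{L}(G) = 0$, which (by the discussion in Section \ref{sub:Rings}) is equivalent to saying that any two linearly equivalent finite $G$-sets are in fact isomorphic as $G$-sets.

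For the ``if'' direction, assuming $G$ is cyclic, I would invoke Proposition \ref{pro:Finite-cyclic-groups}, which says exactly that linearly equivalent $G$-sets are isomorphic. Hence $\mathcal{L}(G) = 0$ and the map is injective.

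For the ``only if'' direction, assuming $G$ is non-cyclic, I would apply Theorem \ref{thm:non-cyclic-pairs} to obtain an unbalanced pair of $G$-sets $X, Y$. By Definition \ref{def:unbalanced_pair}, $\mathbb{C}[X] \cong \mathbb{C}[Y]$ as $\mathbb{C}G$-modules, while $X$ and $Y$ have different orbit-size profiles; in particular they cannot be isomorphic as $G$-sets (this is the same uniqueness of decomposition into orbits recalled in Section \ref{sub:G-sets-classification}). Consequently $X - Y$ is a nonzero element of $\mathcal{L}(G)$, witnessing non-injectivity.

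There is no real obstacle here, since all the substantive work has already been carried out: the cyclic side was handled by the Vandermonde-style triangularity argument in Proposition \ref{pro:Finite-cyclic-groups}, and the non-cyclic side was handled by the case analysis in Theorem \ref{thm:non-cyclic-pairs} (Sylow reduction to $\mathbb{Z}/p \times \mathbb{Z}/p$ or to a Zassenhaus group $\mathbb{Z}/q \rtimes \mathbb{Z}/p$, followed by the explicit constructions of Sections \ref{sub:Case-I} and \ref{sub:Case-II}, together with the induction/inflation bookkeeping of Proposition \ref{pro:involvement}). The only thing to be careful about is remembering that ``unbalanced'' is strictly stronger than ``non-isomorphic''; here we only need the weaker conclusion, so the theorem gives more than enough.
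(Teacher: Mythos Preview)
Your proposal is correct and matches the paper's own argument exactly: the paper simply notes that unbalanced $G$-sets are in particular non-isomorphic, so Theorem~\ref{thm:non-cyclic-pairs} together with Proposition~\ref{pro:Finite-cyclic-groups} yield the corollary. Your rephrasing via $\mathcal{L}(G)=0$ is a helpful bit of extra explicitness, but the substance is identical.
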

Theorems \ref{thm:isospectral_nonisom} and \ref{thm:non-cyclic-pairs}
together give the results announced in Section \ref{sec:Intro}:
\begin{cor}
\label{cor:non-cyclic-isospectral}If a finite non-cyclic group $G$
acts faithfully on a compact connected Riemannian manifold $M$, then
there exist $G$-sets $X,Y$ such that $M\times_{G}X$ and $M\times_{G}Y$
are isospectral and non-isometric.
\end{cor}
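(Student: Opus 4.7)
The plan is to combine the two principal theorems of Sections \ref{sec:Action-and-spectrum} and \ref{sec:Construction}, since the corollary is essentially their conjunction. Concretely, I would argue as follows.

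First, I would invoke Theorem \ref{thm:non-cyclic-pairs}: because $G$ is finite and non-cyclic, it admits an unbalanced pair of $G$-sets $X, Y$, i.e.\ two finite $G$-sets with $\mathbb{C}[X] \cong \mathbb{C}[Y]$ as $\mathbb{C}G$-modules but with differing multisets of orbit sizes. This step produces the abstract combinatorial object (the pair in the Burnside ring $\Omega(G)$ lying in $\mathcal{L}(G)$ but with unequal orbit-size profiles) that is needed as input.

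Second, I would feed this pair into Theorem \ref{thm:isospectral_nonisom}. The hypotheses of that theorem require exactly that $G$ act faithfully on a compact connected Riemannian manifold $M$ and that $(X,Y)$ be unbalanced, both of which are now satisfied. Its conclusion delivers immediately that the tensor products $M \times_G X$ and $M \times_G Y$ are isospectral (this is the linear equivalence giving a transplantation, via Corollary \ref{cor:isospectrality}) and non-isometric (because the multiset of volumes of connected components equals $\frac{\vol M}{|G|}$ times the multiset of orbit sizes, and these differ by assumption). Concatenating these two implications yields the desired $X, Y$.

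Since both ingredients are already proved, there is no genuine obstacle at this stage: the entire content of the corollary has been concentrated into Theorems \ref{thm:isospectral_nonisom} and \ref{thm:non-cyclic-pairs}, and the corollary is a one-line combination. If I were to single out a step to emphasize, it would be verifying that the hypotheses match without hidden compatibility issues (in particular, that the unbalanced pair produced by Theorem \ref{thm:non-cyclic-pairs} for the abstract group $G$ may be used with \emph{any} faithful action of $G$ on a connected compact $M$, which is precisely what Theorem \ref{thm:isospectral_nonisom} allows). Beyond this verification, no further argument is required.
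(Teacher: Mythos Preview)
Your proposal is correct and matches the paper's own treatment exactly: the corollary is stated immediately after the sentence ``Theorems \ref{thm:isospectral_nonisom} and \ref{thm:non-cyclic-pairs} together give the results announced in Section \ref{sec:Intro},'' with no further argument given. Your two-step combination of these theorems is precisely what the paper intends.
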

from which follows:
\begin{cor}
\label{cor:non-cyclic-quotient}If $M$ is a compact connected Riemannian
manifold (or orbifold) such that $\pi_{1}\left(M\right)$ has a finite
non-cyclic quotient, then $M$ has isospectral non-isometric covers.\end{cor}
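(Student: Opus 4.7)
The plan is to lift everything to a finite Galois cover of $M$ associated to the given non-cyclic quotient, and then apply Corollary \ref{cor:non-cyclic-isospectral} there.

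First I would fix an epimorphism $\pi_1(M) \twoheadrightarrow G$ onto a finite non-cyclic group $G$ (in the orbifold case, $\pi_1$ means the orbifold fundamental group). Classical covering space theory (in its orbifold incarnation) produces a finite regular cover $\widetilde{M} \to M$ whose deck transformation group is $G$. I would verify the three properties needed to feed $\widetilde{M}$ into Corollary \ref{cor:non-cyclic-isospectral}: $\widetilde{M}$ is compact because the cover has finite degree $|G|$; it is connected because it corresponds to an actual (normal) subgroup of $\pi_1(M)$, namely the kernel of the epimorphism; and $G$ acts on $\widetilde{M}$ by isometries (with respect to the pulled-back Riemannian metric) and faithfully, as deck groups always do.

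Second, I would invoke Corollary \ref{cor:non-cyclic-isospectral} directly: since $G$ is a finite non-cyclic group acting faithfully on the compact connected Riemannian manifold (or orbifold) $\widetilde{M}$, there exist $G$-sets $X,Y$ such that $\widetilde{M}\times_G X$ and $\widetilde{M}\times_G Y$ are isospectral and non-isometric.

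Finally, I would translate these tensor products back into the language of covers of $M$. Writing $X \cong \bigcup \nicefrac{G}{H_i}$ and $Y \cong \bigcup \nicefrac{G}{K_j}$, the discussion at the start of Section \ref{sec:Action-and-spectrum} gives $\widetilde{M}\times_G X \cong \bigcup \nicefrac{\widetilde{M}}{H_i}$ and $\widetilde{M}\times_G Y \cong \bigcup \nicefrac{\widetilde{M}}{K_j}$. Each quotient $\nicefrac{\widetilde{M}}{H_i}$ is an intermediate cover sitting between $\widetilde{M}$ and $M = \nicefrac{\widetilde{M}}{G}$, and similarly for $\nicefrac{\widetilde{M}}{K_j}$, so both sides are disjoint unions of (finite) covers of $M$. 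This yields the asserted isospectral non-isometric covers.

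The only step that requires any genuine care is the first one, especially in the orbifold setting: one must confirm that the orbifold fundamental group does classify finite regular orbifold covers in a way that makes $\widetilde{M}$ into a compact connected Riemannian orbifold on which $G$ acts faithfully by isometries. Once this standard piece of orbifold covering theory is in place, the rest is just packaging the earlier results.
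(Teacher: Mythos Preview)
Your proof is correct and follows essentially the same route as the paper's: pass to the finite regular cover $\widehat{M}$ of $M$ with deck group $G$ (the paper builds it as $\nicefrac{\widetilde{M}}{N}$ for $N=\ker(\pi_1(M)\twoheadrightarrow G)$, which is the same thing), apply Corollary~\ref{cor:non-cyclic-isospectral}, and observe that the resulting quotients of $\widehat{M}$ are covers of $M$. Your version is a bit more explicit about connectedness, faithfulness, and the translation via $X\cong\bigcup\nicefrac{G}{H_i}$, but the argument is the same.
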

\begin{proof}
Let $\widetilde{M}$ be the universal cover of $M$, and $N$ a normal
subgroup in $\pi_{1}\left(M\right)$ such that $G=\nicefrac{\pi_{1}\left(M\right)}{N}$
is finite non-cyclic. $\widehat{M}=\nicefrac{\widetilde{M}}{N}$ is
a finite cover of $M$ and thus compact, and $G$ acts on it faithfully,
with $\nicefrac{\widehat{M}}{G}=M$. By the previous corollary there
exist isospectral non-isometric unions of quotients of $\widehat{M}$
by subgroups of $G$, and these are covers of $M$.
\end{proof}

\section{\label{sec:Computation}Computation}

Here we show how to compute, using GAP \cite{GAP4}, a basis for $\mathcal{L}\left(G\right)$,
the ideal of linearly trivial elements in the Burnside ring $\Omega\left(G\right)$,
which correspond to reduced pairs of linearly equivalent $G$-sets.
We then consider an action of $G$ and compute the isospectral pairs
which correspond to this basis and action.

We take $G=D_{6}$ (see Section \ref{sub:Example-dihedral}), and
choose a set of representatives $\left\{ H_{i}\right\} $ for the
conjugacy classes of subgroups of $G$ (so that $\left\{ \nicefrac{G}{H_{i}}\right\} $
is a $\mathbb{Z}$-basis of $\Omega\left(G\right)$). We then compute
the corresponding quasiregular characters $c_{i}=\mathcal{S}_{H_{i}}$,
which are the images of this basis under the map $\Omega\left(G\right)\rightarrow R\left(G\right)$.
Finally, we compute a basis for $\mathcal{L}\left(G\right)$, the
kernel of this map, and apply the LLL algorithm to this basis in order
to possibly obtain a sparser one.
\begin{align*}
\mathtt{gap>}\: & \mathtt{G:=DihedralGroup(12);;}\\
\mathtt{gap>}\: & \mathtt{H:=List(ConjugacyClassesSubgroups(G),\: Representative);;}\\
\mathtt{gap>}\: & \mathtt{c:=List(H,\: h\;-\!>\: List(PermutationCharacter(G,h)));;}\\
\mathtt{gap>}\: & \mathtt{LLLReducedBasis(NullspaceIntMat(c)).basis;}\\
 & \mathtt{[[0,1,0,-1,0,0,-1,0,1,0],\:[1,-1,0,-1,0,0,0,-1,0,2],}\\
 & \mathtt{[-1,1,0,1,1,0,-1,0,-1,0],\:[-1,1,1,1,0,-2,0,0,0,0]]}
\end{align*}
For example, the first element in the basis we obtained tells us that
$\nicefrac{G}{H_{2}}-\nicefrac{G}{H_{4}}-\nicefrac{G}{H_{7}}+\nicefrac{G}{H_{9}}$
vanishes in $R\left(G\right)$, i.e., that $\nicefrac{G}{H_{2}}\cup\nicefrac{G}{H_{9}}$
is linearly equivalent to $\nicefrac{G}{H_{4}}\cup\nicefrac{G}{H_{7}}$.
One has to explore the output of $\mathtt{ConjugacyClassesSubgroups(G)}$
to find out which subgroups these exactly are, or alternatively, to
construct $H_{i}$ oneself (in this case, for example, $H_{2}$ belongs
to the conjugacy class of $\left\langle \tau\right\rangle $). The
first line in Table \ref{tab:hex_quotients} presents representatives
$H_{i}$ for the classes returned by $\mathtt{ConjugacyClassesSubgroups(G)}$,
and the bottom four lines of the table show the basis that was calculated
for $\mathcal{L}\left(D_{6}\right)$ above. One may check that pairs
$\mathrm{II}$, $\mathrm{III}$ and $\mathrm{IV}$ are unbalanced.

\begin{table}[h]
\begin{centering}
\begin{tabular}{>{\centering}m{15pt}>{\centering}m{25pt}>{\centering}m{15pt}>{\centering}m{22pt}>{\centering}m{20pt}>{\centering}m{15pt}>{\centering}m{22pt}>{\centering}m{22pt}>{\centering}m{10pt}>{\centering}m{25pt}>{\centering}m{22pt}}
\toprule 
{\large \strut }$H_{i}$ & $\left\langle 1\right\rangle $ & $\left\langle \tau\right\rangle $ & $\left\langle \sigma^{3}\right\rangle $ & $\left\langle \tau\sigma\right\rangle $ & $\left\langle \sigma^{2}\right\rangle $ & ${\scriptstyle \left\langle \tau,\tau\sigma^{3}\right\rangle }$ & ${\scriptstyle \left\langle \tau,\tau\sigma^{2}\right\rangle }$ & $\left\langle \sigma\right\rangle $ & ${\scriptstyle \left\langle \tau\sigma,\tau\sigma^{3}\right\rangle }$ & ${\scriptstyle \left\langle \tau,\tau\sigma\right\rangle }$\tabularnewline
\midrule 
{\large \strut }$\nicefrac{\hexagon}{H}_{i}$ & \includegraphics{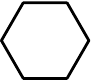} & ~\includegraphics{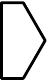} & \includegraphics{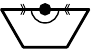} & \includegraphics{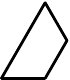} & \includegraphics{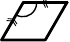} & ~\includegraphics{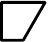} & \includegraphics{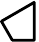} & \includegraphics{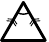} & \includegraphics{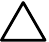} & \includegraphics{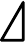}\tabularnewline
\midrule 
$\mathrm{I}$ & 0 & 1 & 0 & -1 & 0 & 0 & -1 & 0 & 1 & 0\tabularnewline
\midrule 
$\mathrm{II}$ & 1 & -1 & 0 & -1 & 0 & 0 & 0 & -1 & 0 & 2\tabularnewline
\midrule 
$\mathrm{III}$ & -1 & 1 & 0 & 1 & 1 & 0 & -1 & 0 & -1 & 0\tabularnewline
\midrule 
$\mathrm{VI}$ & -1 & 1 & 1 & 1 & 0 & -2 & 0 & 0 & 0 & 0\tabularnewline
\bottomrule
\end{tabular}
\par\end{centering}

\caption{\label{tab:hex_quotients}Representatives for the conjugacy classes
of subgroups in $D_{6}$, displayed with the corresponding quotients
of the hexagon, and a basis for $\mathcal{L}\left(D_{6}\right)=\ker\left(\Omega\left(D_{6}\right)\rightarrow R\left(D_{6}\right)\right)$.}
\end{table}

Given an action of $G$ on a manifold $M$, every difference of $G$-sets
$X-Y\in\mathcal{L}\left(G\right)$ gives rise to an isospectral pair,
namely $M\times_{G}X$, $M\times_{G}Y$. We consider the standard
action of $D_{6}$ on the regular hexagon, which we denote by $\hexagon$.
The second line in Table \ref{tab:hex_quotients} shows the quotients
$\nicefrac{\hexagon}{H_{i}}$ corresponding to the subgroups $H_{i}\leq D_{6}$
in the topmost line, and we see that in this case there are no isometric
quotients arising from non-isomorphic $G$-sets. The isospectral pairs
corresponding to the basis we obtained for $\mathcal{L}\left(D_{6}\right)$
are shown in Table \ref{tab:hex_pairs}. 

\begin{table}[H]
\begin{centering}
\begin{tabular}{>{\centering}m{35pt}>{\centering}m{25pt}>{\centering}m{20pt}>{\centering}m{20pt}>{\centering}m{10pt}>{\centering}m{25pt}>{\centering}m{20pt}>{\centering}m{20pt}}
\toprule 
\addlinespace[4pt]
$\mathrm{I}$ &  & \includegraphics{hex_t} & \includegraphics{hex_ts_ts3} & {\Large $\sim$} & \includegraphics{hex_ts} & \includegraphics{hex_t_ts2} & \tabularnewline
\midrule
\addlinespace[4pt]
$\mathrm{II}$ & \centering{}\includegraphics{hex1} & \includegraphics{hex_t_ts} & \includegraphics{hex_t_ts} & {\Large $\sim$} & \includegraphics{hex_t} & \includegraphics{hex_ts} & \includegraphics{hex_s}\tabularnewline
\midrule
\addlinespace[4pt]
$\mathrm{III}$ & \centering{}\includegraphics{hex_t} & \includegraphics{hex_ts} & \includegraphics{hex_s2} & {\Large $\sim$} & \includegraphics{hex1} & \includegraphics{hex_t_ts2} & \includegraphics{hex_ts_ts3}\tabularnewline
\midrule
\addlinespace[4pt]
$\mathrm{IV}$ & \centering{}\includegraphics{hex_t} & \includegraphics{hex_s3} & \includegraphics{hex_ts} & {\Large $\sim$} & \includegraphics{hex1} & \includegraphics{hex_t_ts3} & \includegraphics{hex_t_ts3}\tabularnewline
\midrule
\midrule 
\addlinespace[4pt]
$\mathrm{I}-\mathrm{III}$ & \centering{}\includegraphics{hex1} & \includegraphics{hex_ts_ts3} & \includegraphics{hex_ts_ts3} & {\Large $\sim$} & \includegraphics{hex_ts} & \includegraphics{hex_ts} & \includegraphics{hex_s2}\tabularnewline
\bottomrule
\end{tabular}
\par\end{centering}

\caption{\label{tab:hex_pairs}The isospectral pairs corresponding to the basis
for $\mathcal{L}\left(D_{6}\right)$ described in Table \ref{tab:hex_quotients},
and an example of an element obtained as a combination of these.}
\end{table}

All isospectral pairs which arise from linear equivalences between
$D_{6}$-sets are spanned by these four, as explained in Section \ref{sub:Rings}.
The bottom line in Table \ref{tab:hex_pairs} demonstrates such a
pair (corresponding to the element $\mathrm{I}-\mathrm{III}$). We
remark that the pair corresponding to $\mathrm{I}$ is a hexagonal
analogue of Chapman's two piece band \cite{chapman1995drums} - such
analogues exist for every $n$ (but for odd $n$ the isospectral pair
obtained is also isometric).

\section{\label{sec:Generalizations}Generalizations}

The isospectrality technique this paper describes (and thus Sunada's
technique as well) has actually little to do with spectral geometry,
since no property of the Laplace operator is used apart from being
linear and commuting with isometries. For any linear operator $F$
(on function spaces or other bundles, over manifolds or general spaces),
these methods produce $F$-isospectral objects, given an action of
a group which commutes with $F$.

However, it seems that in much more general settings, when a group
action is studied, Sunada pairs are worth looking at. The most famous
examples are Galois theory, giving Gassmann's construction of arithmetically
equivalent number fields \cite{gassmann1926bemerkungen}, and Riemannian
coverings, giving Sunada's isospectral construction; but Sunada pairs
were also studied in the context of Lie groups \cite{deturck1989isospectral},
ergodic systems \cite{lemanczyk2002relative}, dessin d'enfants \cite{merling2010gassmann},
the spectrum of discrete graphs \cite{brooks1996some} and metric
ones \cite{shapira2006quantum}, the Ihara zeta function of graphs
\cite{stark2000zeta}, and the Witten zeta function of a Lie group
\cite{larsen2004determining}.

Sunada pairs in $G$ correspond to linearly equivalent transitive
$G$-sets, and we have seen that in the context of Riemannian coverings
Sunada's technique generalizes to non-transitive $G$-sets as well.
We achieved this by considering the quotient $\nicefrac{M}{H}$ as
the tensor product with the transitive $G$-set $\nicefrac{G}{H}$,
i.e., by noting that $\nicefrac{M}{H}\cong M\times_{G}\nicefrac{G}{H}$,
and then studying $M\times_{G}X$ for a general $G$-set $X$%
\footnote{Alternatively, for $X=\bigcup\nicefrac{G}{H_{i}}$ we studied the
disjoint union of quotients $\bigcup\nicefrac{M}{H_{i}}$.%
}. It is natural to ask whether other applications of Sunada pairs
can be generalized in an analogous way. Of particular interest are
unbalanced pairs, which do not exist in the transitive case (see Remark
\ref{rem:No-unbalanced-Sunada}). In the settings of Riemannian manifolds
they allowed us to deduce non-isometry, and one may hope that they
play interesting roles in other situations.

\paragraph*{Acknowledgements}

I am grateful to my advisor, Alex Lubotzky, for his guidance, and
to Peter Doyle and Carolyn Gordon for their encouragement. I have
had fruitful discussions, and received valuable comments from Menny
Aka, Emmanuel Farjoun, Luc Hillairet, Asaf Horev, Doron Puder, and
Ron Rosenthal, and I thank them heartily.

\bibliographystyle{plain}
\bibliography{/home/math/Roaming/Math/BibTex,/home/ori/Math/BibTex}

\end{document}